\newtheorem{theorem}{Theorem}[section]
\newtheorem*{theorem*}{Theorem}
\newtheorem{lemma}[theorem]{Lemma}
\newtheorem{corollary}[theorem]{Corollary}
\newtheorem{question}{Question}
\theoremstyle{definition}
\newtheorem{definition}[theorem]{Definition}
\theoremstyle{remark}
\newtheorem{remark}[theorem]{Remark}
\newcommand{\N}{\mathbb{N}}
\newcommand{\Z}{\mathbb{Z}}
\newcommand{\C}{\mathbb{C}}
\newcommand{\T}{\mathbb{T}}
\newcommand{\D}{\mathbb{D}}
\newcommand{\PP}{\mathbb{P}}
\newcommand{\dd}{\mathrm{d}}
\newcommand{\ho}{ő\xspace}
\newcommand{\Sz}{Sz}
\newcommand{\Meas}{\mathcal{P}}
\newcommand{\MMeas}{\mathrm{Mar}(\T)}
\newcommand{\A}{\mathscr{A}}
\title{On a density problem related to a theorem of Szeg\H{o}}
\author{Chiara Paulsen}
\address{Chiara Paulsen, University of Wisconsin-Madison, Department of Mathematics, 480 Lincoln Drive, Madison, WI 53706, USA}
\email{cpaulsen3@wisc.edu}
\subjclass[2020]{42C05, 60G25}
\begin{document}
\begin{abstract}
    A classical theorem of Szeg\H{o} states that for any probability measure $\mu=w\frac{\dd\theta}{2\pi}+\mu_s$ on the unit circle the polynomials are dense in $L^2(\T,\mu)$ if and only if $\log(w)\notin L^1(\T)$. A related question asks whether the monomials with exponents in some subset $\Lambda\subseteq \N_0$ already span $L^2(\T,\mu)$ if $\log(w)\notin L^1(\T)$. A result by Olevskii and Ulanovskii gives an answer if $\mu$ belongs to a class of absolutely continuous measures. We investigate the same question for Markoff measures.
\end{abstract}
\maketitle
\section{Introduction}
Let $\mu$ be a probability measure on the complex unit circle $\T$, the latter of which we shall identify with the interval $[0,2\pi)$ in the usual way. Denote by 
\[
\dd m=\frac{\dd \theta}{2\pi}
\]
the normalized Lebesgue measure on $\T$. By the Lebesgue decomposition theorem, one can decompose $\mu$ with respect to $m$ 
\[
\dd\mu=w\,\dd m+\dd\mu_s
\]
where $\mu_s$ and $m$ are mutually singular and $w$ is the Radon-Nikodym derivative of $\mu$ and $m$. 

We call $\mu$ \textit{non-degenerate} if $|\mathrm{supp}(\mu)|=\infty$. In the following, we denote by $\mathcal{P}$ the set of all non-degenerate probability measures on $\T$.

An important theorem by Szeg\H{o} \cite{SzegoGrenander58} in the theory of orthogonal polynomials on the unit circle (OPUC) implies that for any $\mu\in \mathcal{P}$ the polynomials are dense in $L^2(\T,\mu)$ if and only if 
\begin{equation}\label{Non-Szego Condition}
    \int_0^{2\pi} \log(w(\theta))\;\mathrm{d}\theta=-\infty.
\end{equation}
In fact, this equivalence already appeared in the work of Kolmogorov \cite{Kolmogorov41} in the context of prediction theory. The connection between Kolmogorov's work on prediction theory and Szeg\ho's work on OPUC was made by Krein \cite{Krein45}. A contemporary discussion of the connection between Szeg\ho's theorem and prediction theory can be found in \cite{Bingham12}. Let us briefly outline this connection. For every Gaussian stationary stochastic process  $(X_n)_{n\in\Z}$  with zero mean, there exists $\mu\in\Meas$ such that 
\[
E[X_n\overline{X_0}]=\int_\T z^{-n}\,\dd \mu.
\]
By Kolmogorov \cite{Kolmogorov41}*{Lemma 4} there is an isomorphism between the Hilbert space $\mathcal{H}$ spanned by $\{X_n\mid n\in\Z\}$ and $L^2(\T,\mu)$ mapping $X_n$ to $z^n$. Via this isomorphism, the error of predicting $X_0$ with the knowledge of $\{X_n\mid n\leq-1\}$ is given by \cite{Bingham12}*{Theorem~3}, \cite{Kolmogorov41}*{Chapter 9}
\[
E\Big[(X_0-P_{(-\infty,-1]}X_0)^2\Big]=\exp\Big(\int_0^{2\pi}\log w\,\dd m\Big)
\]
where $P_{(-\infty,-1]}$ is the projection onto the subspace of $\mathcal{H}$ spanned by $\{X_n\mid n\leq-1\}$. 
Thus, $(X_n)_n$ is a deterministic process (that is, the past predicts the future with zero error) if and only if the associated measure $\mu$ fulfills \eqref{Non-Szego Condition}, which, in turn, is equivalent to the polynomials being dense in $L^2(\T,\mu)$.

 A measure $\mu\in\Meas$ that fulfills \eqref{Non-Szego Condition} is called \textit{non-Szeg\H{o}}. If the integral in \eqref{Non-Szego Condition} is finite then $\mu$ is called \textit{Szeg\ho}. We denote the class of all Szeg\ho measures by $\Sz$ and put $\Sz^c:=\Meas\backslash \Sz$. 

The connection to prediction theory motivates the following questions: Is it already sufficient to know only parts of the past to determine the future events of $(X_n)_n$? If so, how large a part of the past can be 'forgotten'? In order to make the question more precise, consider the family of exponentials
\[
E(\Lambda):=\mathrm{span}\{z^\lambda\mid z\in \T,\lambda\in \Lambda\}
\]
for $\Lambda\subseteq \N_0$. Furthermore, define
\[
\A:=\{(\Lambda,\mathcal{C})\mid \Lambda\subseteq \N_0, \mathcal{C}\subseteq\Meas, E(\Lambda)\text{ dense in } L^2(\T,\mu)\text{ for all }\mu\in\mathcal{C}\}.
\]
\begin{question}\label{Question}
For which $\Lambda\subseteq\N_0$ and $\mathcal{C}\subseteq \Meas$ is $(\Lambda,\mathcal{C})\in \A$?
\end{question}
\begin{remark}\label{ObviousFacts} We observe the following obvious facts.
\begin{itemize}
    \item[1.)] By Szeg\ho's theorem, for any $\mu\in\Sz$ and $\Lambda\subseteq \N_0$, $E(\Lambda)$ is not dense in $L^2(\T,\mu)$. Thus, for every pair $(\Lambda,\mathcal{C})\in \A$, it follows that $\mathcal{C}\subseteq \Sz^c$.
    \item[2.)] One has the following implications
\begin{align*}
    \Lambda'\subseteq\Lambda, (\Lambda',\mathcal{C})\in \A&\Rightarrow (\Lambda,\mathcal{C})\in \A,\\
    \mathcal{C}'\subseteq \mathcal{C}, (\Lambda,\mathcal{C})\in \A&\Rightarrow (\Lambda,\mathcal{C}')\in \A.
\end{align*}
Thus, the most difficult part about Question~1 is to make $\Lambda$ as small and $\mathcal{C}$ as large as possible.
\item[3.)] It is easy to show that $(\N_0\backslash \Gamma,\Sz^c)\in\A$ for all finite $\Gamma\subseteq \N_0$ (see Corollary~\ref{FiniteGamma}).
\end{itemize}
\end{remark} 
If one studies Question~\ref{Question} for a proper subclass $\mathcal{C}$, instead of considering the maximal class $\Sz^c$,  Olevskii and Ulanovskii \cite{OlevskiiUlanovskii}*{Theorem 1} have obtained a result for the class
\[
    \mathcal{W}:=\Big\{\mu=w \,\dd m\mid w>0\; \dd m\text{-a.e.}, w \text{ bounded}, w \text{ increasing on } (0,2\pi) \Big\}.
\]
\begin{theorem*}[Olevskii, Ulanovskii]
  Let $\Gamma\subseteq \N$ with
  \[
  \sum_{\gamma \in \Gamma} \frac{1}{\sqrt{\gamma}}<\infty.
  \]
 Then $(\N_0\backslash \Gamma,\mathcal{W}\cap \Sz^c)\in\A$.
\end{theorem*}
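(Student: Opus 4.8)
The plan is to argue by duality, converting the density statement into a uniqueness assertion about a function with a spectral gap. Suppose, toward a contradiction, that $E(\N_0\setminus\Gamma)$ is not dense in $L^2(\T,\mu)$ for some $\mu=w\,\dd m\in\mathcal{W}\cap\Sz^c$. Then there is $f\in L^2(\T,\mu)$ with $f\not\equiv 0$ and $\int_\T f\,\overline{z^\lambda}\,\dd\mu=0$ for every $\lambda\in\N_0\setminus\Gamma$. Put $F:=fw$. Since $w$ is bounded, $F\in L^2(\T,\dd m)$; since $w>0$ a.e., $F\not\equiv 0$; the orthogonality relations say precisely that the Fourier coefficients $\widehat F(n)$ vanish for all $n\in\N_0\setminus\Gamma$, i.e.\ $\widehat F$ is supported on $\Z_{<0}\cup\Gamma$; and $\int_\T|F|^2/w\,\dd m=\lVert f\rVert_{L^2(\mu)}^2<\infty$. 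It then remains to contradict these properties using $\int_0^{2\pi}\log w\,\dd\theta=-\infty$.

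Next I would extract the ``Szeg\ho side'' input. Applying Jensen's inequality to $|F|^2/w$, whose $\dd m$-integral is finite, gives $\int_\T\log(|F|^2/w)\,\dd m\le\log\int_\T|F|^2/w\,\dd m<\infty$; since $\log^+|F|\in L^1$ (as $F\in L^2$), the identity $\int_\T\log|F|^2\,\dd m=\int_\T\log(|F|^2/w)\,\dd m+\int_\T\log w\,\dd m$ is legitimate, and its right-hand side equals $(\text{finite})+(-\infty)=-\infty$, so $\int_\T\log|F|\,\dd m=-\infty$. Using that $w$ is increasing one can even localize this: for each $\delta\in(0,2\pi)$ the essential infimum of $w$ on $[\delta,2\pi]$ is positive, hence $\int_0^\delta\log w\,\dd\theta=-\infty$, and since $\int_0^\delta|F|^2\,\dd m\le w(\delta)\int_\T|F|^2/w\,\dd m$, the same computation on $(0,\delta)$ yields $\int_0^\delta\log|F|\,\dd\theta=-\infty$. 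In words, $F\not\equiv0$ is forced to be so small near $z=1$ that $\log^-|F|$ is not integrable on any arc abutting that point. (Monotonicity is used here only to localize the singularity; as the support condition on $\widehat F$ is rotation invariant, controlling $F$ near one boundary point is in any case enough.)

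The final and hardest step is purely complex-analytic: one must show that a nonzero $F\in L^2(\T)$ with $\widehat F$ supported on $\Z_{<0}\cup\Gamma$, where $\sum_{\gamma\in\Gamma}\gamma^{-1/2}<\infty$, necessarily has $\log|F|\in L^1(\T)$ (equivalently, $\log^-|F|$ integrable near every point). This contradicts the previous step, forcing $F\equiv0$ and hence $f=0$ a.e., which completes the proof. Write $F=F_-+F_\Gamma$ with $F_-:=\sum_{n<0}\widehat F(n)z^n$ and $F_\Gamma:=\sum_{\gamma\in\Gamma}\widehat F(\gamma)z^\gamma\in H^2$. The co-analytic piece is harmless, since $\overline{F_-}\in H^2$ gives $\log|F_-|\in L^1(\T)$ when $F_-\not\equiv0$; and when $\Gamma$ is finite one multiplies by $z^{-\max\Gamma}$ to land in $\overline{H^2}$, recovering the situation behind Corollary~\ref{FiniteGamma}. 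For infinite $\Gamma$ one has to show that the gap part $F_\Gamma$ cannot conspire with $F_-$ to produce non-integrable $\log^-|F|$, and here the hypothesis $\sum_{\gamma\in\Gamma}\gamma^{-1/2}<\infty$ should enter through one of its avatars: it is equivalent to an $\varepsilon^{-1/2}\,\dd\varepsilon$-integrability condition near $0$ for $\sum_{\gamma\in\Gamma}e^{-\varepsilon\gamma}$, which bounds $|F_\Gamma(1-\varepsilon)|$ for the analytic continuation of $F_\Gamma$ into $\D$; it is the Blaschke condition for $\{1-\gamma^{-1/2}\}_{\gamma\in\Gamma}$, allowing one to attach an auxiliary Blaschke product or outer function accumulating at a boundary point and keyed to $\Gamma$; and it is exactly the sparseness making $\prod_{\gamma\in\Gamma}(1-\zeta^2/\gamma)$ an entire function of genus zero with zero set $\{\pm\sqrt\gamma\}$. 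Combining such a boundary estimate for $F_\Gamma$ with a Poisson-type control of $\log|F|$ near the point and the classical fact that a nonzero function of bounded characteristic has integrable $\log^-$ on every arc should yield the claim.

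I expect essentially all the difficulty to live in that last step: pinning down why $\sum_{\gamma\in\Gamma}\gamma^{-1/2}<\infty$, rather than some weaker or incomparable sparseness, is precisely the threshold preventing a spectral-gap function from vanishing super-exponentially at a boundary point, and then carrying out the attendant Poisson and Blaschke estimates near $z=1$ with the correct constants so that the contradiction with the Jensen step genuinely closes.
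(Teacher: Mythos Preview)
The paper does not prove this theorem; it is quoted from \cite{OlevskiiUlanovskii}*{Theorem 1} purely as background, and the paper's own results concern the disjoint class of Markoff measures. There is therefore no proof in the paper to compare your attempt against.

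As to the proposal itself: your duality reduction is correct and standard, and the Jensen computation yielding $\int_\T\log|F|\,\dd m=-\infty$ (together with its localization to arcs at $z=1$ via monotonicity of $w$) is clean. But you explicitly concede that the decisive step---showing that a nonzero $F\in L^2(\T)$ with Fourier support in $\Z_{<0}\cup\Gamma$ and $\sum_{\gamma\in\Gamma}\gamma^{-1/2}<\infty$ must have $\log|F|\in L^1(\T)$---is not carried out; you list several reformulations of the sparseness hypothesis and gesture at Blaschke and Poisson estimates without executing any of them. That step is where the entire substance of the Olevskii--Ulanovskii result lives, so what you have written is a (correct) reduction together with heuristics, not a proof.
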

In this paper, we will add an answer to Question~\ref{Question} for sets $\Lambda$ of the form
\[\Lambda(\mathbf{k},\bm{\ell}):=\bigcup_{j\in\N} \llbracket k_j,k_j+\ell_j\rrbracket,\quad\quad(\mathbf{k}\in\N^\N,\bm{\ell}\in\N_0^\N)\]
where $\mathbf{k}$ is strictly increasing and $\llbracket n,m \rrbracket:=[n,m]\cap \Z$ for $n,m\in\Z, n\leq m$ and where $\mathcal{C}$ is the class of Markoff measures $\MMeas$. This class was introduced by Khrushchev in \cite{KhrushchevClassifictation}. Informally, $\mu$ belongs to $\MMeas$ if the sequence of Verblunsky coefficients $(\alpha_n(\mu))_{n}$ contains a sufficiently dense subsequence that remains bounded away from zero.
We will give a formal definition of Markoff measures and Verblunsky coefficients in Section~2. In Section 4 we will show the following.
\begin{theorem}\label{Existence of infinite sets}
    Let $s>1$ and let $\mathbf{k}:=(k_j)_j\in\N^\N$ be strictly increasing. Set $\lfloor\mathbf{k}^s\rfloor:=(\lfloor k_j^s \rfloor)_j\in\N^\N$. Then, 
    \[
    \Big(\Lambda\big(\mathbf{k},\lfloor \mathbf{k}^s\rfloor\big),\,\MMeas\Big)\in\A.
    \]
    In particular, there exists a subset $\Lambda\subseteq \N$ with lower density $\underline{\dd}\big(\Lambda\big)=0$ such that \linebreak$(\Lambda,\MMeas)\in\A$. 
\end{theorem}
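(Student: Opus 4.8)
My plan is to reduce the theorem to a single quantitative estimate about approximating negative powers of $z$ by polynomials in $L^2(\T,\mu)$, and then read off both assertions from it. Throughout, $\Phi_n=\Phi_n(\,\cdot\,;\mu)$ will denote the monic orthogonal polynomials of $\mu$, $\alpha_n=\alpha_n(\mu)$ its Verblunsky coefficients, and $\Pi_n:=\operatorname{span}_{\C}\{1,z,\dots,z^{n}\}$. The starting point is that every $\mu\in\MMeas$ is non-Szeg\H{o}: the defining property of a Markoff measure forces $|\alpha_n|\ge\delta$ for infinitely many $n$ (with some $\delta>0$), so $\sum_n|\alpha_n|^{2}=\infty$, whence $\exp\big(\int_0^{2\pi}\log w\,\dd m\big)=\prod_n(1-|\alpha_n|^{2})=0$; thus $\mu\in\Sz^{c}$ and, by Szeg\H{o}'s theorem, $E(\N_0)$ is dense in $L^2(\T,\mu)$. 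Consequently it suffices to prove that $z^{M}$ lies in the $L^2(\T,\mu)$-closure $\overline{E(\Lambda(\mathbf k,\lfloor\mathbf k^{s}\rfloor))}$ for every fixed $M\in\N_0$: then $E(\N_0)\subseteq\overline{E(\Lambda)}$, whence $\overline{E(\Lambda)}\supseteq\overline{E(\N_0)}=L^2(\T,\mu)$.

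Fix $M$. Since $\mathbf k$ is strictly increasing, $k_j>M$ for all large $j$, and then the block $B_j:=\llbracket k_j,\,k_j+\lfloor k_j^{s}\rfloor\rrbracket$ is contained in $\Lambda(\mathbf k,\lfloor\mathbf k^{s}\rfloor)$. Multiplication by $z^{-k_j}$ is a unitary operator on $L^2(\T,\mu)$ (the symbol $z^{-k_j}$ has modulus $1$ on $\T$), carrying $E(B_j)$ onto $\Pi_{\lfloor k_j^{s}\rfloor}$ and $z^{M}$ onto $z^{M-k_j}$; since unitaries preserve distance,
\[
\operatorname{dist}_{L^2(\mu)}\!\big(z^{M},E(B_j)\big)=\operatorname{dist}_{L^2(\mu)}\!\big(z^{-(k_j-M)},\Pi_{\lfloor k_j^{s}\rfloor}\big).
\]
So the theorem follows from the key estimate: \emph{for every $\mu\in\MMeas$ there are $C=C(\mu)>0$ and $\rho=\rho(\mu)\in(0,1)$ with}
\[
\operatorname{dist}_{L^2(\mu)}\!\big(z^{-r},\Pi_n\big)\ \le\ C\,\rho^{\,n-r}\qquad(0\le r\le n)
\]
(or any bound of comparable strength). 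Indeed, putting $r_j:=k_j-M$ and $n_j:=\lfloor k_j^{s}\rfloor$, one has $n_j-r_j=\lfloor k_j^{s}\rfloor-k_j+M\to+\infty$ \emph{precisely because $s>1$}; hence $\operatorname{dist}_{L^2(\mu)}(z^{M},E(B_j))\le C\rho^{\,n_j-r_j}\to0$, i.e.\ $z^{M}\in\overline{E(\Lambda)}$, which proves the first assertion.

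For the final sentence I would take $s=\tfrac{3}{2}$ and $k_j:=2^{2^{j}}$, so that $k_{j+1}=k_j^{2}$ and therefore $k_{j+1}/k_j^{s}=k_j^{1/2}\to\infty$, and set $\Lambda:=\Lambda(\mathbf k,\lfloor\mathbf k^{3/2}\rfloor)$. Evaluating the counting function at $N:=k_{j+1}-1$, the block $B_{j+1}$ contributes nothing and the earlier blocks contribute at most $\sum_{i\le j}(\lfloor k_i^{3/2}\rfloor+1)=O(k_j^{3/2})$ (the sum is dominated by its last term); hence $|\Lambda\cap\llbracket 1,N\rrbracket|/N=O(k_j^{3/2}/k_{j+1})\to0$, so $\underline{\dd}(\Lambda)=0$, and by the first part $(\Lambda,\MMeas)\in\A$.

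The main obstacle is the key estimate. Szeg\H{o}'s condition alone is only qualitative — it gives $\operatorname{dist}_{L^2(\mu)}(z^{-r},\Pi_n)\to0$ as $n\to\infty$ for each fixed $r$ but with no rate, and for a general $\mu\in\Sz^{c}$ this distance can tend to $0$ arbitrarily slowly — so one must really use the Markoff hypothesis to obtain a rate that survives the competition between $r\sim k_j$ and $n\sim k_j^{s}$. The hypothesis supplies the exponential decay $\|\Phi_n\|_{L^2(\mu)}\le C\rho^{n}$ (a positive proportion of the factors $1-|\alpha_k|^{2}$ are $\le 1-\delta^{2}$), which settles the gapless case $\operatorname{dist}_{L^2(\mu)}(1,z\,\Pi_{n-1})=\|\Phi_n\|_{L^2(\mu)}$; the hard part is controlling the loss caused by the missing low frequencies $\{1,\dots,r-1\}$ when $r$ is as large as $n^{1/s}$. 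A tempting but, by itself, insufficient attempt is to approximate $1$ using only polynomials supported on multiples of $r$, which yields $\operatorname{dist}_{L^2(\mu)}(1,z^{r}\Pi_n)\le\|\Phi_{\lfloor n/r\rfloor+1}(\,\cdot\,;\mu^{(r)})\|_{L^2(\mu^{(r)})}$ for the push-forward $\mu^{(r)}$ of $\mu$ under $z\mapsto z^{r}$; this degrades as $r\to\infty$, since $\mu^{(r)}$ approaches a Szeg\H{o} measure. One therefore has to build the approximating polynomial more carefully, exploiting the large Verblunsky coefficients of $\mu$ that fall inside the window $\{r,\dots,r+n\}$ (not merely those surviving the passage to $\mu^{(r)}$) and extracting the clean dependence on $n-r$; this is the technical heart of the proof, and I expect it to be where the work of Sections~2--4 is concentrated.
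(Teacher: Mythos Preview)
Your reduction and the lower-density construction are both correct and mirror the paper: density of $E(\Lambda(\mathbf{k},\lfloor\mathbf{k}^s\rfloor))$ reduces, via the unitary shift, to $\beta_\mu(k_j-M,\lfloor k_j^s\rfloor)\to 0$ for each fixed $M$, and your choice $k_j=2^{2^j}$ is a valid instance of the paper's recursive condition $k_{j+1}\ge j(k_j+\lfloor k_j^s\rfloor)+1$.

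The gap is exactly where you place it, but both the \emph{form} of the key estimate and your guess at its mechanism are off. The paper does not prove $\beta_\mu(r,n)\le C\rho^{\,n-r}$; that bound would be far stronger than required (it would already allow $\ell_j=k_j+g(k_j)$ for any $g\to\infty$, not only $\ell_j\sim k_j^{s}$) and it is not clear it holds for a general Markoff measure. What is actually established (Lemma~3.7) is the \emph{universal} inequality
\[
\beta_\mu(k,n)\ \le\ \|\Phi_{n+1}\|_{L^2(\mu)}\,(2n+2)^{k-1},
\]
valid for every $\mu\in\Meas$. For Markoff $\mu$ one inserts $\|\Phi_{n+1}\|\le C\rho^{\,n}$, and the polynomial loss $(2n+2)^{k-1}$ is beaten by $\rho^{\,n}$ as soon as $n$ exceeds a constant times $k\log k$; hence $f(k)=\lfloor k^{t}\rfloor$ is $\beta$-approximating for any $1<t<s$, and Theorem~3.3 finishes the argument. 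The proof of the inequality is also more elementary than you anticipate and uses no window-localized Verblunsky information: one writes $z^{-k}\Phi_{n+1}^{*}(z)$, which has $L^2$-norm $\|\Phi_{n+1}\|$ and equals $z^{-k}$ plus a combination of $z^{-k+1},\dots,z^{\,n+1-k}$ with coefficients $\overline{b_{n+1,n+1-m}}$, iterates the triangle inequality to bound $\beta_\mu(k,n)$ by $\|\Phi_{n+1}\|$ times sums of products of the $|b_{n+1,j}|$ (Lemma~3.6), and then estimates these coefficients crudely by $|b_{n+1,j}|\le\binom{n+1}{j}$ using only that the zeros of $\Phi_{n+1}$ lie in $\D$. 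The Markoff hypothesis enters solely through the global decay of $\|\Phi_{n+1}\|$, not through any analysis of the $\alpha_j$ inside $\{r,\dots,r+n\}$.
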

Theorem~\ref{Existence of infinite sets} will follow from the main result of this article, Theorem~\ref{Theorem beta}, which states that for every $\mu\in\Sz^c$ and every strictly increasing $\mathbf{k}\in\N^\N$ there exists $\bm{\ell}\in\N_0^\N$ such that $E(\Lambda(\mathbf{k},\bm{\ell}))$ is dense in $L^2(\T,\mu)$. In Corollary~\ref{MainCorollary} we further show that $\mathbf{k}$ and $\bm{\ell}$ can be chosen such that $\underline{\dd}(E(\Lambda(\mathbf{k},\bm{\ell})))=0$.

It should be noted that \cite{OlevskiiUlanovskii} and this paper explore very different situations. For every $\mu\in\mathcal{W}$ one has $\alpha_n\to 0$ by Rakhmanov's theorem \cite{Rakhmanoc83}, thus  $\MMeas$ and $\mathcal{W}$ are disjoint. Moreover, every measure contained in $\mathcal{W}$ is absolutely continuous with full support. In contrast, if $\mu\in\MMeas$ then $\mathrm{supp}(\mu_{ac})\neq \T$ and also, $\MMeas$ contains measures with $\mu_s\neq 0$ and pure point measures with full support which are important in physics because of the Anderson localization phenomenon (see Lemma~\ref{LemmaMarkoffMeasure} and Remark~\ref{RemarkMarkoffMeasures}). 

The structure of this paper is as follows. In Section 2 we briefly review some notions from OPUC theory. In particular, we state Szeg\ho's theorem (Theorem~\ref{Szego Theorem}) and give a definition and some examples of Markoff measures. In Section 3 we prove the main theorem, Theorem~\ref{Theorem beta}. In Section 4 we prove Theorem~\ref{Existence of infinite sets} and show other applications of Theorem~\ref{Theorem beta}.
\subsection*{Notation} 
\begin{itemize}
    \item We use the conventions $\N:=\{n\in\Z\mid n\geq 1\}$ and $\N_0:=\N\cup\{0\}$.
    \item Interval of integers: For $n,m\in \Z$  with $n\leq m$ define
    \[
    \llbracket n,m \rrbracket:=[n,m]\cap \Z.
    \]
    \item Upper and lower density: For $\Lambda\subseteq \N$ define the lower and upper density by
    \[
    \underline{\dd}(\Lambda):=\liminf_{N\to\infty} \frac{|\Lambda\cap \llbracket 1,N\rrbracket|}{N},\quad \overline{\dd}(\Lambda):=\limsup_{N\to\infty}\frac{|\Lambda\cap \llbracket 1,N\rrbracket|}{N}
    \]
    respectively.
    \item $\Meas$ is the set of all non-degenerate probability measures on $\T$.
    \item Closure of a subset: For $\mu\in\Meas$ and a subset $M\subseteq L^2(\T,\mu)$ we write 
    \[
    \overline{M}^{L^2(\T,\mu)}
    \]
    for the closure of $M$ in $L^2(\T,\mu)$.
    \item $\Sz$ is the set of all measures $\dd\mu=w\,\dd m+\dd\mu_s\in\Meas$ for which
    \[
    \int_0^{2\pi}\log(w(\theta))\,\dd\theta>-\infty.
    \]
    \item $\MMeas$ denotes the set of all Markoff measures (see definition on page \pageref{MarkoffMeasureDefinition}).
    \item We already defined earlier
    \begin{align*}
        E(\Lambda)&=\mathrm{span}\{z^\lambda\mid z\in \T,\lambda\in \Lambda\},\\
        \hspace{2cm}\A&=\{(\Lambda,\mathcal{C})\mid \Lambda\subseteq \N_0, \mathcal{C}\subseteq\Meas, E(\Lambda)\text{ dense in } L^2(\T,\mu)\text{ for all }\mu\in\mathcal{C}\},\\
        \Lambda(\mathbf{k},\bm{\ell})&=\bigcup_{j\in\N} \llbracket k_j,k_j+\ell_j\rrbracket,\quad\quad(\mathbf{k}\in\N^\N,\bm{\ell}\in\N_0^\N)
    \end{align*}
    where $\mathbf{k}$ is strictly increasing.
    \item Let $\PP$ be the space of all polynomials. For $n\in\N_0$, let $\PP_{\leq n}$ be the space of all polynomials with degree less than or equal to $n$.
    \item We will also need the following definitions later
    \begin{align*}
        \beta_\mu(k,n)&:=\min_{\pi\in \PP_{\leq n}}\|z^{-k}-\pi(z)\|_{L^2(\T,\mu)},\hspace{-1cm}&&(k,n\in\N_0,\mu\in\Meas),\\
        \beta_\mu(k,\infty)&:=\lim_{n\to\infty}\beta_\mu(k,n)\\
        &\;=\inf_{\pi\in \PP}\|z^{-k}-\pi(z)\|_{L^2(\T,\mu)},\hspace{-1cm}&&(k\in\N_0,\mu\in\Meas).
    \end{align*}
\end{itemize}

\subsection*{Acknowledgment.}
    I am very grateful to my advisor, Professor Sergey Denisov, for suggesting the problem formulated as Question 1, and for his countless helpful discussions and comments.
\section{Facts from OPUC}
\begin{theorem}[Szeg\ho]\label{Szego Theorem}
   Let $\mu\in\Meas$ with $\dd \mu=w\,\dd m+\dd\mu_s$.  Then
    \[
    \exp\Big(\int_0^{2\pi} \log w\,\dd m\Big)=\beta_\mu(1,\infty).
    \]
\end{theorem}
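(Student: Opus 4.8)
The plan is to prove the two inequalities $\beta_\mu(1,\infty)\ge \exp(\int\log w\,\dd m)$ and $\beta_\mu(1,\infty)\le \exp(\int\log w\,\dd m)$ separately, both resting on the geometric-mean/arithmetic-mean inequality for $\log$ (Jensen's inequality) and a suitable approximation of outer functions. First I would reformulate the extremal problem: since $\PP$ is dense in $\PP$ (trivially) and $z^{-1}=\overline{z}$ on $\T$, we have $\beta_\mu(1,\infty)^2=\inf\{\|1-z\pi(z)\|^2_{L^2(\mu)}:\pi\in\PP\}=\inf\{\|Q\|^2_{L^2(\mu)}: Q\in\PP,\ Q(0)=1\}$, i.e. it is the distance in $L^2(\mu)$ from $0$ to the affine set of polynomials normalized to have constant term $1$. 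This is the standard ``Szegő extremal problem'' formulation, and I would carry it out carefully because it makes both bounds transparent.

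For the lower bound $\beta_\mu(1,\infty)^2\ge \exp(\int 2\log w\,\dd m)$ (note the factor $2$ because $\beta$ is an $L^2$-norm, not its square — I will keep track of this): for any $Q\in\PP$ with $Q(0)=1$,
\[
\|Q\|_{L^2(\mu)}^2\ \ge\ \int_\T |Q|^2 w\,\dd m\ \ge\ \exp\Big(\int_\T \log\big(|Q|^2 w\big)\,\dd m\Big)
\ =\ \exp\Big(\int_\T \log|Q|^2\,\dd m\Big)\cdot\exp\Big(\int_\T \log w\,\dd m\Big),
\]
using that $\mu\ge w\,\dd m$ and then Jensen's inequality for the convex function $-\log$. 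Then $\int_\T \log|Q|^2\,\dd m=2\log|Q(0)|=0$ by the mean value property applied to $\log|Q|$ (valid since $Q(0)=1\ne 0$; one handles zeros of $Q$ on $\T$ by noting they contribute $-\infty$ only to the integral of $\log|Q|$ from above, which is fine for the direction of Jensen we need, and zeros inside $\D$ are removed by Blaschke factors that do not decrease $\int\log|Q|^2\,\dd m$). Taking the infimum over such $Q$ gives $\beta_\mu(1,\infty)\ge \exp(\int\log w\,\dd m)$.

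For the upper bound I would use the Szegő function. If $\int\log w\,\dd m=-\infty$ there is nothing to prove (the right side is $0$ and, e.g. by Remark 1.2(1)/the non-Szegő case, $\beta_\mu(1,\infty)=0$), so assume $\log w\in L^1$. Define $D(z)=\exp\big(\tfrac1{4\pi}\int_0^{2\pi}\frac{e^{i\theta}+z}{e^{i\theta}-z}\log w(\theta)\,\dd\theta\big)$, the outer function with $|D|^2=w$ a.e. and $D(0)=\exp(\int\log w\,\dd m)>0$. Since $1/D$ is in the Hardy space $H^2$ of the disk (as $\log(1/w)$ has finite integral and $1/D$ is outer), its Taylor polynomials $\pi_n$ converge to $1/D$ in $H^2$, hence $z\pi_n(z)\to z/D(z)$ and so $1-z\pi_n \to 1-z/D$; evaluating the $L^2(\mu)$-norm and using $|D|^2=w$ together with $\mu_s$ being singular (so $1/D\in H^2$ gives boundary values $\mu_s$-a.e. after the standard argument, or one first truncates $w$ away from $0$ and passes to the limit) yields $\|1-z\pi_n\|_{L^2(\mu)}^2\to \int_\T |1-z/D|^2 w\,\dd m$. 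A direct computation of this last integral, expanding $|1-z/D|^2 w = w - 2\mathrm{Re}(z w/D) + |z|^2$, and using $w/D=\overline{D}$ on $\T$ together with $\int_\T \overline{zD}\,\dd m = \overline{\text{(first Taylor coeff.\ issues)}}$... more cleanly: $\int_\T |1-z/D|^2 w\,\dd m = \int_\T |D - z|^2\,\dd m = \|D\|_{H^2}^2 - 2\mathrm{Re}\,\widehat{D}(1) + 1$; since $D$ is outer with $D(0)>0$ one shows this equals $|D(0)|^2 = \exp(2\int\log w\,\dd m)$ by an elementary Hardy-space identity (the distance from $z$ to the span of $\{z^2,z^3,\dots\}$ inside $H^2$... ) — I will instead simply invoke that the Szegő extremal problem for $\dd m$ weighted by $w$ has value $|D(0)|^2$, which is classical. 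Hence $\beta_\mu(1,\infty)\le |D(0)| = \exp(\int\log w\,\dd m)$.

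The main obstacle is the upper-bound half: one must produce near-optimal polynomials for the measure $\mu$, not merely for its absolutely continuous part, and control the singular part $\mu_s$. The clean way is the two-step truncation: first replace $w$ by $w_\varepsilon:=\max(w,\varepsilon)$, so $\log w_\varepsilon\in L^\infty$, build $1/D_\varepsilon\in H^\infty\subseteq H^2$ with bounded Taylor approximants; show $\|1-z\pi_n^{(\varepsilon)}\|_{L^2(\mu)}^2 \to \int|1-z/D_\varepsilon|^2\,\dd\mu = \int |D_\varepsilon|^{-2}|D_\varepsilon-z D_\varepsilon\cdot\!\,\ldots$ — again reducing to $\int|D_\varepsilon|^2\,\dd m$ type integrals plus a term $\int |1/D_\varepsilon|^2\,\dd\mu_s$ that is bounded by $\varepsilon^{-1}\mu_s(\T)\cdot\!$(bound) and is dominated once we note $1/D_\varepsilon$ is bounded, giving $\beta_\mu(1,\infty)^2 \le |D_\varepsilon(0)|^2 + C_\varepsilon\,\mu_s(\T)$; then let $\varepsilon\downarrow 0$ using $|D_\varepsilon(0)|\to |D(0)|$ by monotone/dominated convergence for $\log$, and handle the $\mu_s$ term by the sharper observation that $1-z\pi_n$ can additionally be forced to be small on $\mathrm{supp}(\mu_s)$ — or, most efficiently, cite that $\beta_\mu(1,\infty)$ depends only on $w$ (the singular part does not help, by the lower bound already proven, and does not hurt, by choosing $\pi_n$ adapted to $w$ alone and absorbing the $\mu_s$ contribution into the $\varepsilon$-limit). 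Getting this singular-part bookkeeping right, rather than any single inequality, is where the real care is needed.
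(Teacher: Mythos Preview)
The paper does not prove Theorem~2.1 at all: immediately after the statement it simply cites Grenander--Szeg\H{o} and Simon's OPUC book and moves on to the corollary. So there is no ``paper's own proof'' to compare your attempt against; what you have written is a sketch of the classical argument found in those references (Jensen for the lower bound, the Szeg\H{o} outer function with a truncation $w_\varepsilon=\max(w,\varepsilon)$ for the upper bound).

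Two remarks on the sketch itself. First, your bookkeeping of squares is inconsistent. Your Jensen computation correctly yields
\[
\|Q\|_{L^2(\mu)}^2\ \ge\ \exp\Big(\int_\T\log w\,\dd m\Big)\qquad(Q(0)=1),
\]
with no factor $2$ in the exponent; the ``$\exp(\int 2\log w)$'' you announced is not what the calculation gives, and your final line ``$\beta_\mu(1,\infty)\ge\exp(\int\log w\,\dd m)$'' does not follow. Likewise, with your normalization $D(0)=\exp(\tfrac12\int\log w\,\dd m)$, so your upper bound reads $\beta_\mu(1,\infty)\le|D(0)|=\exp(\tfrac12\int\log w\,\dd m)$, not $\exp(\int\log w\,\dd m)$. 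In fact the identity consistent with both your argument and with the paper's own equation~\eqref{VerblunskyIdentity} (together with $\beta_\mu(1,n)=\|\Phi_{n+1}\|$ shown in the proof of Lemma~\ref{LemmaDistance}) is $\beta_\mu(1,\infty)^2=\exp(\int\log w\,\dd m)$; the statement of Theorem~\ref{Szego Theorem} as printed is missing a square, which is harmless for the paper since only the qualitative Corollary~\ref{Szego Corollary} is used.

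Second, the upper-bound half is where the real work lies, and your write-up is genuinely incomplete there: $1/D$ need not be in $H^2$ (only $\log w\in L^1$ is assumed, not $1/w\in L^1$), so the Taylor-polynomial step fails as stated; and the singular part has to be killed, not merely bounded by a constant times $\mu_s(\T)$. Your proposed fix---pass to $w_\varepsilon=\max(w,\varepsilon)$ so that $1/D_\varepsilon\in H^\infty$, then let $\varepsilon\downarrow0$---is exactly the standard remedy, but the elimination of the $\mu_s$-contribution still needs an actual argument (e.g.\ multiplying the approximant by a further polynomial factor vanishing on a set carrying most of $\mu_s$, or invoking Kolmogorov's theorem that the infimum depends only on $w$). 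As written, that step is asserted rather than carried out.
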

This appears first in \cite{SzegoGrenander58}*{Chapter 3.1}, see also \cite{SimonOPUC1}*{Chapter 2}. We have the following corollary \cite{SimonOPUC1}*{Theorem~1.5.7}.
\begin{corollary}\label{Szego Corollary}
    Let $\mu\in\Meas$ with $\dd \mu=w\,\dd m+\dd\mu_s$. Then $\mu\in\Sz^c$ if and only if the polynomials are dense in $L^2(\T,\mu)$, i.e. $\beta_\mu(k,\infty)=0$ for all $k\in\N$.
\end{corollary}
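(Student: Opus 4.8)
The plan is to deduce the statement from Szeg\H{o}'s theorem (Theorem~\ref{Szego Theorem}) together with two elementary facts. The first is that the trigonometric polynomials $\mathrm{span}\{z^n\mid n\in\Z\}$ are dense in $L^2(\T,\mu)$ for every finite Borel measure $\mu$ on $\T$: by regularity of $\mu$ the continuous functions are dense in $L^2(\T,\mu)$, and by Fej\'er's theorem the trigonometric polynomials are dense in $C(\T)$. The second is that, by definition, $\beta_\mu(k,\infty)$ equals the distance in $L^2(\T,\mu)$ from $z^{-k}$ to the closed subspace $\overline{\PP}^{L^2(\T,\mu)}$, so that $\beta_\mu(k,\infty)=0$ if and only if $z^{-k}\in\overline{\PP}^{L^2(\T,\mu)}$. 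Below I abbreviate $\overline{\PP}:=\overline{\PP}^{L^2(\T,\mu)}$.

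First I would record that the polynomials are dense in $L^2(\T,\mu)$ if and only if $\beta_\mu(k,\infty)=0$ for all $k\in\N$. One direction is immediate from the distance interpretation just recalled. For the converse, if $z^{-k}\in\overline{\PP}$ for every $k\geq 1$, then $\overline{\PP}$ contains $\mathrm{span}\{z^n\mid n\in\Z\}$ (it already contains $z^n$ for all $n\geq 0$), and hence $\overline{\PP}=L^2(\T,\mu)$ by the density of the trigonometric polynomials.

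The only step requiring a genuine argument is the implication $\beta_\mu(1,\infty)=0\Rightarrow\beta_\mu(k,\infty)=0$ for all $k\in\N$. Since multiplication by $z^{-1}$ is an isometry of $L^2(\T,\mu)$, the subspace $\overline{\PP}$ is invariant under it once it contains $z^{-1}$: given $f=\lim_n p_n$ with $p_n\in\PP$, write
\[
z^{-1}p_n=p_n(0)\,z^{-1}+z^{-1}\big(p_n-p_n(0)\big),
\]
where the second summand lies in $\PP$ because $p_n-p_n(0)$ is divisible by $z$, while the first lies in $\overline{\PP}$ by hypothesis; letting $n\to\infty$ gives $z^{-1}f\in\overline{\PP}$. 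Applying this $k$ times to the constant function $1\in\PP$ shows $z^{-k}\in\overline{\PP}$, i.e.\ $\beta_\mu(k,\infty)=0$, for every $k\in\N$. (Alternatively, one can argue by induction using $\|z^{-k}-z^{-1}p\|_{L^2(\T,\mu)}=\|z^{-(k-1)}-p\|_{L^2(\T,\mu)}$ and approximating the single negative-degree term of $z^{-1}p$ by a polynomial.)

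Finally I would close the loop with Szeg\H{o}'s theorem: $\mu\in\Sz^c$ means $\int_0^{2\pi}\log w\,\dd m=-\infty$, which by Theorem~\ref{Szego Theorem} is equivalent to $\beta_\mu(1,\infty)=\exp\!\big(\int_0^{2\pi}\log w\,\dd m\big)=0$. Combining this with the two preceding steps yields the equivalence of $\mu\in\Sz^c$, of $\beta_\mu(k,\infty)=0$ for all $k\in\N$, and of the density of the polynomials in $L^2(\T,\mu)$. The genuinely nontrivial ingredient is Theorem~\ref{Szego Theorem} itself; everything else is routine Hilbert-space bookkeeping, the only mild subtlety being the treatment of the constant term in the passage from $z^{-1}\in\overline{\PP}$ to $z^{-k}\in\overline{\PP}$ for all $k$.
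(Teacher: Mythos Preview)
Your argument is correct. The paper does not actually supply a proof of this corollary at all; it simply cites \cite{SimonOPUC1}*{Theorem~1.5.7} and moves on, so there is nothing to compare approaches against. What you have written is exactly the standard way to deduce the corollary from Theorem~\ref{Szego Theorem}: the density of the trigonometric polynomials reduces the question to whether every $z^{-k}$ lies in $\overline{\PP}$, and the $z^{-1}$-invariance step (splitting off the constant term of $p_n$) correctly promotes $\beta_\mu(1,\infty)=0$ to $\beta_\mu(k,\infty)=0$ for all $k$. Your handling of the one delicate point---that $z^{-1}p_n\in\overline{\PP}$ for each $n$ and hence the limit $z^{-1}f$ lies in the closed subspace $\overline{\PP}$---is clean and complete.
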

Let $\mu\in\Meas$ with $\dd \mu=w\,\dd m+\dd\mu_s$. Because of the non-degeneracy of $\mu$, the monomials $z^n$ are linearly independent in $L^2(\T,\mu)$. Thus, for every $n\in\N_0$ there is a uniquely determined polynomial
\[
\Phi_{n}(\mu;z)=\sum_{i=0}^n b_{n,i}(\mu)z^i\quad (z\in\T)
\]
such that $b_{n,i}\in\C$ for every $i\in\llbracket 0,n\rrbracket$, $b_{n,n}=1$ and, for every $k\in\llbracket 0,n-1\rrbracket$, 
\[
\big\langle \Phi_{n}(\mu;z),\Phi_{k}(\mu;z)\big\rangle_{L^2(\T,\mu)}=0.
\]
The polynomials $\Phi_{n}(\mu;z)$ are called \textit{monic orthogonal polynomials of} $\mu$. The monic orthogonal polynomials satisfy a recursion relation, the \textit{Szeg\H{o} recurrence},
\[
\Phi_{n+1}(\mu;z)=z\Phi_{n}(\mu;z)-\overline{\alpha_n(\mu)}\Phi^{*}_n(\mu;z)
\]
for every $n\in\N_0$, where, for $z\in\T$,
\[
\Phi^{*}_n(\mu;z)=z^n\overline{\Phi_{n}(\mu;z)}=\sum_{i=0}^n \overline{b_{n,n-i}(\mu)}z^i.
\]
For $n\in\N_0$ the recursion coefficient $\alpha_n(\mu)$ is called the $n$-th \textit{Verblunsky coefficient} of $\mu$. One has $\alpha_n(\mu)\in\mathbb{D}$ for every $n\in\N_0$. In fact, the mapping
\[
\Meas \to \mathbb{D}^{\N_0},\quad \mu \mapsto (\alpha_n(\mu))_n
\]
is a bijection.
This is known as \textit{Verblunksy's theorem} \cite{SimonOPUC1}*{Theorem~1.7.11}. One can also express the $L^2$-norms of the monic orthogonal polynomials and the integral of $\log(w)$ by the Verblunsky coefficients \cite{SimonOPUC1}*{Theorems~1.5.2,~2.3.1}
\begin{align}\label{VerblunskyIdentity}
     \|\Phi_{n+1}(\mu;z)\|^2_{L^2(\T,\mu)}&=\prod_{i=0}^{n} (1-|\alpha_i(\mu)|^2),\\\nonumber
     \exp\Big(\int_0^{2\pi}\log w\;\mathrm{d}m\Big)&=\lim_{n \to\infty} \|\Phi_{n}(\mu;z)\|^2_{L^2(\T,\mu)}=\prod_{i=0}^\infty (1-|\alpha_i(\mu)|^2).\label{VerblunskyIdentity}
\end{align}
The second equation proves the following corollary.
\begin{corollary}\label{Verblunkyl2Condition}
    Let $\mu\in\Meas$. Then $\mu\in\Sz$ if and only if $(\alpha_n(\mu))_n\in\ell^2$.
\end{corollary}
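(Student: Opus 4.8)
The plan is to read this off directly from the second identity in \eqref{VerblunskyIdentity}, which states that $\exp\big(\int_0^{2\pi}\log w\,\dd m\big)=\prod_{i=0}^\infty(1-|\alpha_i(\mu)|^2)$. First I would record that $\alpha_i(\mu)\in\D$ for every $i$ by Verblunsky's theorem, so each factor $1-|\alpha_i(\mu)|^2$ lies in $(0,1]$; hence the partial products $\prod_{i=0}^{n}(1-|\alpha_i(\mu)|^2)$ form a non-increasing sequence in $(0,1]$, which therefore converges to a limit in $[0,1]$, namely the infinite product above. By the definition of $\Sz$, the condition $\mu\in\Sz$ is equivalent to $\int_0^{2\pi}\log w\,\dd m>-\infty$, and upon exponentiating this is equivalent to $\prod_{i=0}^\infty(1-|\alpha_i(\mu)|^2)>0$.

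It then remains to invoke the elementary fact that, for a sequence $(a_i)_i$ with $a_i\in[0,1)$, one has $\prod_{i=0}^\infty(1-a_i)>0$ if and only if $\sum_{i=0}^\infty a_i<\infty$. For the forward direction I would use $1-x\le e^{-x}$ for $x\ge 0$, which gives $\prod_{i=0}^{n}(1-a_i)\le e^{-\sum_{i=0}^n a_i}$, so that divergence of $\sum_i a_i$ forces the product to $0$. For the converse, if $\sum_i a_i<\infty$ then $a_i\le \tfrac12$ for all large $i$, and on $[0,\tfrac12]$ one has $1-x\ge e^{-2x}$ (since $-\log(1-x)\le x/(1-x)\le 2x$ there); hence the tail of the product is bounded below by a positive constant, and the whole product is positive. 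Applying this with $a_i=|\alpha_i(\mu)|^2$ shows that $\mu\in\Sz$ if and only if $\sum_{i=0}^\infty|\alpha_i(\mu)|^2<\infty$, i.e.\ $(\alpha_n(\mu))_n\in\ell^2$.

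I do not anticipate any real obstacle here: the entire analytic content is already contained in the stated identity \eqref{VerblunskyIdentity}, and the only supplementary ingredient is the standard convergence criterion for infinite products with factors $1-a_i$, $0\le a_i<1$, whose proof consists of the two one-line estimates above.
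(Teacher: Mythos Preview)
Your proposal is correct and follows exactly the route the paper indicates: the paper's entire proof is the sentence ``The second equation proves the following corollary,'' and you have simply spelled out the standard step linking positivity of $\prod_{i}(1-|\alpha_i|^2)$ to $\sum_i|\alpha_i|^2<\infty$. There is nothing to add.
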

In the following, we will write $\Phi_n, b_{n,i}$ and $\alpha_n$ instead of $\Phi_n(\mu;z), b_{n,i}(\mu)$ and $\alpha_n(\mu)$ whenever the measure $\mu$ is clear from the context.
\subsection*{Markoff measures}\label{MarkoffMeasureDefinition}
Let $\mu\in\Meas$.  For $\ell\in\N_0$ and $\varepsilon\in(0,1)$ define the class $\mathrm{Mar}_{\varepsilon,\ell}(\T)\subseteq \Meas$ via
\begin{equation}\label{markoffmeasure}
\mu\in\mathrm{Mar}_{\varepsilon,\ell}(\T)\Leftrightarrow \inf_{n\in\N_0}\max_{n\leq j\leq n+\ell} |\alpha_j|\geq \varepsilon.
\end{equation}
The class $\MMeas$ of \textit{Markoff measures}, which was introduced by Khrushchev in \cite{KhrushchevClassifictation}, is defined as follows
\[
\MMeas:=\bigcup_{\ell\in\N_0}\bigcup_{\varepsilon>0} \mathrm{Mar}_{\varepsilon,\ell}(\T).
\]
\begin{remark}
    This definition is in fact a characterization of the original definition \cite{KhrushchevClassifictation}*{Theorem~1.8}. Khrushchev defines $\MMeas$ as every $\mu\in\Meas$ such that $\dd m$ is not contained in the derived set of
    \[
    \{|\varphi_n|^2\dd \mu\mid n\in\N_0 \}
    \]
    where $\varphi_n$ is the $n$-th orthogonal polynomial in $L^2(\T,\mu)$ obtained by using the Gram-Schmidt algorithm on the monomials $z^n$ with $n\in\N_0$.
\end{remark}
Let us collect a few properties and examples of Markoff measures. 
\begin{lemma}[Properties of Markoff measures]\label{LemmaMarkoffMeasure}
Let $\mu\in\Meas$.
\begin{itemize}
\setlength{\itemsep}{1ex}
    \item[a)]  $\MMeas\subseteq \Sz^c$.
    \item[b)] $\mu\in\mathrm{Mar}_{\varepsilon,\ell}(\T)\Rightarrow \|\Phi_n\|^2_{L^2(\T,\mu)}\leq (1-\varepsilon^2)^{\frac{n-1}{\ell+1}-1}$.
    \item[c)] $\mu\in\MMeas\Rightarrow\mathrm{supp}(\mu_{ac})\neq \T$. 
    \item[d)] $\mathrm{supp}(\mu)\neq \T\Rightarrow \mu\in\MMeas$.
\end{itemize}
The converse implications in c) and d) are not true.
\end{lemma}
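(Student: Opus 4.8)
The plan is to read off a) and b) from the Verblunsky description of $\mu$, to deduce d) from the derived-set characterization of $\MMeas$ recalled in the remark above, to obtain c) from Khrushchev's classification, and to settle the two non-implications by explicit examples. For a): if $\mu\in\mathrm{Mar}_{\varepsilon,\ell}(\T)$, then applying \eqref{markoffmeasure} with $n=0,\ell+1,2(\ell+1),\dots$ shows that every block of $\ell+1$ consecutive indices contains a $j$ with $|\alpha_j|\geq\varepsilon$; hence $|\alpha_j|\geq\varepsilon$ for infinitely many $j$, so $(\alpha_n(\mu))_n\notin\ell^2$ and $\mu\in\Sz^c$ by Corollary~\ref{Verblunkyl2Condition}. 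For b): partition $\llbracket 0,n-1\rrbracket$ into consecutive blocks of length $\ell+1$; by \eqref{markoffmeasure} each of the $\lfloor n/(\ell+1)\rfloor$ complete blocks contains an index $j$ with $1-|\alpha_j|^2\leq 1-\varepsilon^2$, while $1-|\alpha_i|^2\leq 1$ for every other $i$, so \eqref{VerblunskyIdentity} yields
\[
\|\Phi_n\|_{L^2(\T,\mu)}^2=\prod_{i=0}^{n-1}(1-|\alpha_i|^2)\leq(1-\varepsilon^2)^{\lfloor n/(\ell+1)\rfloor}\leq(1-\varepsilon^2)^{\frac{n-1}{\ell+1}-1},
\]
the last inequality using $\lfloor n/(\ell+1)\rfloor\geq\frac{n-1}{\ell+1}-1$ together with $0<1-\varepsilon^2<1$. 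Both arguments are routine.

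For d) I would use the characterization from the remark following the definition of $\MMeas$: $\mu\in\MMeas$ if and only if $dm$ does not belong to the derived set (set of weak-$*$ limit points) of $\{|\varphi_n|^2\,d\mu:n\in\N_0\}$. If $\mathrm{supp}(\mu)=:K$ is a proper closed subset of $\T$, then each probability measure $|\varphi_n|^2\,d\mu$ is absolutely continuous with respect to $\mu$ and hence supported in $K$; since $K$ is closed, every weak-$*$ limit point of $(|\varphi_n|^2\,d\mu)_n$ is again supported in $K\subsetneq\T$, so in particular is not $dm$. Therefore $dm$ is not in the derived set, i.e.\ $\mu\in\MMeas$.

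The implication in c) is the one of real content — it does not reduce to a), b), d) — and I expect it to be the main obstacle. The plan is to derive it from Khrushchev's description of the derived set of $\{|\varphi_n|^2\,d\mu\}$ in \cite{KhrushchevClassifictation}, which shows that $\mathrm{supp}(\mu_{ac})=\T$ forces $dm$ into that derived set; by the characterization used for d) this gives $\mu\notin\MMeas$, i.e.\ equivalently $\mu\in\MMeas\Rightarrow\mathrm{supp}(\mu_{ac})\neq\T$. A more self-contained route through the Szeg\H{o} transfer matrices looks plausible: by b), $\|\Phi_n\|$ decays exponentially, so $|\varphi_n|=|\Phi_n|/\|\Phi_n\|$ grows exponentially off the $m$-small set where $|\Phi_n|$ decays at a comparable rate (recall $\int_\T\log|\Phi_n|\,dm=0$, since $\Phi_n$ is monic with all zeros in $\D$), which should confine $\mathrm{supp}(\mu_{ac})$; but turning exponential growth of $|\varphi_n|$ into the vanishing of $w$ on an entire \emph{arc} is precisely the delicate point, and Khrushchev's classification is the efficient way to get there.

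For the two non-implications: choosing $|\alpha_{n_k}|=\varepsilon$ along a sufficiently sparse sequence (e.g.\ $n_{k+1}-n_k\to\infty$) and $\alpha_n=0$ otherwise gives, by the standard theory of sparse Verblunsky coefficients, a purely singular $\mu$; it lies in $\Sz^c$ (since $\sum_k\varepsilon^2=\infty$) but not in $\MMeas$ (for each fixed $\ell$ the large gaps eventually contain a window $\llbracket n,n+\ell\rrbracket$ on which all $\alpha_j$ vanish), whereas $\mathrm{supp}(\mu_{ac})=\emptyset\neq\T$; this refutes the converse of c). Taking $(\alpha_n)_n$ i.i.d.\ with $|\alpha_n|\equiv\varepsilon$ gives, by Anderson localization for the associated random CMV matrix, a pure point measure with $\mathrm{supp}(\mu)=\T$ lying in $\mathrm{Mar}_{\varepsilon,0}(\T)\subseteq\MMeas$; this refutes the converse of d).
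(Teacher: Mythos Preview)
Your arguments for a) and b) are essentially the paper's, and your treatment of d) is in fact cleaner: the paper simply cites \cite{KhrushchevClassifictation}*{Corollary~1.9}, whereas your derived-set argument (any weak-$*$ limit of $|\varphi_n|^2\,d\mu$ stays supported in the proper closed set $K=\mathrm{supp}(\mu)$, hence cannot be $dm$) gives a self-contained proof.

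For c) you are working much harder than necessary. The paper invokes Rakhmanov's theorem: if $\mathrm{supp}(\mu_{ac})=\T$ then $\alpha_n(\mu)\to 0$. Since $\mu\in\MMeas$ forces $|\alpha_{j_n}|\geq\varepsilon$ along a subsequence, the contrapositive immediately gives $\mathrm{supp}(\mu_{ac})\neq\T$. Your proposed route through Khrushchev's description of the derived set is not wrong, but it ultimately rests on the same circle of ideas (Rakhmanov-type convergence of $|\varphi_n|^2\,d\mu$), so there is no gain in going that way; the transfer-matrix sketch you mention is, as you suspect, the hard part of Rakhmanov's theorem and not something to redo here.

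For the counterexamples the paper takes more concrete routes than you do. For the failure of the converse in c), the paper sets $\alpha_{k!}=1/\sqrt{k}$ and $\alpha_n=0$ otherwise, so that $\alpha_n\to 0$ (hence $\mu\notin\MMeas$ trivially) while $(\alpha_n)\notin\ell^2$ and $\mu$ is purely singular continuous by \cite{SimonOPUC2}*{Theorem~12.5.2}; your constant-$\varepsilon$-along-sparse-indices example also works but needs a separate argument for $\mu\notin\MMeas$ and appeals to sparse-coefficient theory for the singularity. For the failure of the converse in d), the paper uses Zhedanov's explicit pure point measure $\mu=(1-p)\sum_{n\geq 0}p^n\delta_{q^n}$ with $q$ not a root of unity, for which one has the closed formula $|\alpha_n|^2=(1-p)^2/(1+p^2-2p\,\mathrm{Re}(q^{n+1}))$, giving $|\alpha_n|>(1-p)/(1+p)$ and $\mathrm{supp}(\mu)=\T$ by elementary means; your Anderson-localization example is correct but brings in substantially heavier machinery for the same conclusion.
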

\begin{proof}
a) Let $\mu\in\MMeas$. Then, $(\alpha_n)_n$ does not converge to zero by definition of $\MMeas$. In particular, $(\alpha_n)_n\notin \ell^2$. Thus, by Corollary \ref{Verblunkyl2Condition}, $\mu\in \Sz^c$.

b)  Let $\mu\in\mathrm{Mar}_{\varepsilon,\ell}(\T)$. From \eqref{VerblunskyIdentity} it follows that
\[
\|\Phi_n\|^2_{L^2(\T,\mu)}=\prod_{i=0}^{n-1} (1-|\alpha_i|^2)\leq (1-\varepsilon^2)^{\lfloor\frac{n-1}{\ell+1}\rfloor}\leq(1-\varepsilon^2)^{\frac{n-1}{\ell+1}-1}.
\]

c) By Rakhmanov's theorem \cite{Rakhmanoc83}*{§3}, \cite{SimonOPUC2}*{Corollary~9.1.11}, for any $\mu\in\Meas$ with $\mathrm{supp}(\mu_{ac})=\T$ one has 
\[
\lim_{n\to\infty}\alpha_n(\mu)=0.
\]
Let $\mu\in\MMeas$. Then $(\alpha_n(\mu))_n$ does not converge to 0. Thus, by Rakhmanov's theorem, $\mathrm{supp}(\mu_{ac})\neq \T$. To see that the converse implication of c) doesn't hold true, consider a measure $\mu\in\Meas$ such that for all $n\in\N_0$
\[
\alpha_n(\mu)=
\begin{cases}
    \frac{1}{\sqrt{k}}, & n=k!, k\in\N_0\\
    0, &\text{else}.
\end{cases}
\]
By Corollary \ref{Verblunkyl2Condition} $\mu\in\Sz^c$. However, $\mu\notin\MMeas$ since ${a_n\to 0}$. Furthermore, $\mu$ is purely singular continuous (see \cite{SimonOPUC2}*{Theorem~12.5.2}). In particular, $\mu_{ac}=0$.

d) For a proof see \cite{KhrushchevClassifictation}*{Corollary~1.9}. We briefly present a counterexample to the converse implication due to Zhedanov \cite{Zhedanov20}. For $q\in\T$ which is not a root of unity and $0<p<1$ set
\[
\mu:=(1-p)\sum_{n=0}^\infty p^n\delta_{q^n}
\]
where $\delta_w$ is the Dirac measure with $\mathrm{supp}(\delta_w)=\{w\}$. Clearly, $\mu\in\Meas$. There is also a simple formula for $|\alpha_n|^2$ in terms of $p$ and $q$ (see \cite{Zhedanov20}*{p.5}). For all $n\in\N_0$
\[
|\alpha_n|^2=\frac{(1-p)^2}{1+p^2-2p\,\mathrm{Re}(q^{n+1})}.
\]
Thus, 
\[
\frac{1-p}{1+p}<|\alpha_n|<1
\]
for all $n\in\N_0$ which implies that $\mu\in\MMeas$. Furthermore, since $q$ is not a root of unity, 
\[
\mathrm{supp}(\mu)=\overline{\{q^n\mid n\in\N_0\}}=\T.\qedhere
\]
\end{proof}
\begin{remark}\label{RemarkMarkoffMeasures}
    In some sense, measures which are pure point with full support are generic examples. Namely, for any rotation invariant probability measure $\beta_0$ on $\D$ with $\beta_0\neq \delta_0$ and
    \[
    \int_{\D} \log(1-|\omega|)\,\dd \beta_0(\omega)>-\infty
    \]
        one has that almost every sequence $(\omega_n)_{n}\in \D^{\N_0}$ with respect to the product measure
        \[
        \beta:=\bigotimes_{i=0}^\infty \beta_0
        \]
        generates a measure $\mu$, by choosing $\mu$ with $\alpha_n(\mu)=\omega_n$ for every $n\in\N_0$, that is pure point and has full support \cite{SimonOPUC2}*{Theorem~12.6.2}. One can also view the Verblunsky coefficients $\alpha_n$ as being values of an i.i.d. process $(\omega_n)_n$ with common distribution $\beta_0$. Such measures are important in physics because, going back to the work of Anderson \cite{Anderson58}, certain random Hamiltonians on lattices produce dense point spectra - a phenomenon called \textit{Anderson localization}. Mathematical references include \cite{Carmona90} and \cite{Pastur92}.
\end{remark}
\section{Main Theorem}
Recall that for $k,n\in\N_0$
\[
\beta_\mu(k,n)=\min_{\pi\in \PP_{\leq n}}\|z^{-k}-\pi(z)\|_{L^2(\T,\mu)}.
\]
\begin{definition}
    Let $\mu\in\Meas$. A strictly increasing function $f:\N\to\N$ is called $\beta$\textit{-approximating} for $\mu$ if 
    \begin{equation*}
        \lim_{k\to\infty} \beta_\mu(k,f(k))=0.
    \end{equation*}
\end{definition}
\begin{corollary}\label{CorollaryBetaApproximating}
    Let $\mu\in\Meas$. Then there exists a $\beta$-approximating $f$ if and only if $\mu\in\Sz^c$.
\end{corollary}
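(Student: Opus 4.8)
The plan is to deduce this directly from Corollary~\ref{Szego Corollary}, which already identifies $\Sz^c$ with the class of measures for which $\beta_\mu(k,\infty)=0$ for every $k\in\N$, using only the elementary monotonicity properties of $\beta_\mu$: namely, $\beta_\mu(k,n)$ is nonincreasing in $n$ (with limit $\beta_\mu(k,\infty)$ as $n\to\infty$), and $\beta_\mu(k,\infty)$ is nondecreasing in $k$.

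For the direction $\mu\in\Sz^c\Rightarrow$ ``a $\beta$-approximating $f$ exists'', I would proceed by an explicit diagonal construction. Since $\beta_\mu(k,\infty)=0$ and $\beta_\mu(k,n)$ decreases to $\beta_\mu(k,\infty)$ as $n\to\infty$, for each $k$ one can fix $N(k)\in\N$ with $\beta_\mu(k,N(k))<1/k$. Then set $f(1):=N(1)$ and, recursively, $f(k):=\max\{N(k),f(k-1)+1\}$; this yields a strictly increasing $f:\N\to\N$ with $f(k)\ge N(k)$, so, using that $\beta_\mu(k,\cdot)$ is nonincreasing, $\beta_\mu(k,f(k))\le\beta_\mu(k,N(k))<1/k\to 0$, i.e.\ $f$ is $\beta$-approximating for $\mu$.

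For the converse, suppose $f$ is $\beta$-approximating for $\mu$. Since $\PP_{\le f(k)}\subseteq\PP$, we have $\beta_\mu(k,\infty)\le\beta_\mu(k,f(k))\to 0$ as $k\to\infty$. I would then invoke the monotonicity $\beta_\mu(k,\infty)\le\beta_\mu(k+1,\infty)$, whose short proof is: for any $\pi\in\PP$ the polynomial $z\pi$ again lies in $\PP$, and $\|z^{-k}-z\pi\|_{L^2(\T,\mu)}=\|z^{-(k+1)}-\pi\|_{L^2(\T,\mu)}$ because $|z|=1$ on $\T$, so passing to infima over $\pi\in\PP$ gives the claim. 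A nonnegative nondecreasing sequence converging to $0$ is identically $0$, hence $\beta_\mu(k,\infty)=0$ for all $k\in\N$, and Corollary~\ref{Szego Corollary} then gives $\mu\in\Sz^c$.

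I do not expect a real obstacle; this is a bookkeeping corollary. The one spot that needs care is the monotonicity of $k\mapsto\beta_\mu(k,\infty)$ used in the converse: without it one could a priori imagine $\beta_\mu(k,\infty)>0$ for every $k$ while still $\beta_\mu(k,\infty)\to 0$, which would break the conclusion. An alternative to this step is to quote Szeg\H{o}'s theorem (Theorem~\ref{Szego Theorem}): $\beta_\mu(1,\infty)=\exp\big(\int_0^{2\pi}\log w\,\dd m\big)$ vanishes exactly when $\mu\in\Sz^c$, and $\beta_\mu(1,\infty)\le\beta_\mu(k,\infty)$ by the same monotonicity, so $\beta_\mu(k,\infty)\to 0$ already forces $\mu\in\Sz^c$.
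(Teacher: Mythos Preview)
Your argument is correct and follows the same route as the paper: both directions go through Corollary~\ref{Szego Corollary} together with the monotonicity of $\beta_\mu(k,n)$ in $n$. Your write-up is in fact more careful than the paper's in the converse direction---you make explicit, and justify, the monotonicity $\beta_\mu(k,\infty)\le\beta_\mu(k+1,\infty)$ needed to pass from $\beta_\mu(k,\infty)\to 0$ to $\beta_\mu(k,\infty)=0$ for every $k$, a step the paper's displayed line compresses somewhat ambiguously.
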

\begin{proof}
    Let $\mu\in\Sz^c$. Then by Corollary~\ref{Szego Corollary} 
    \[
        \lim_{n\to\infty} \beta_\mu(k,n)=\beta_\mu(k,\infty)=0
    \]
    for every $k\in\N$.
    Thus there exists an $f:\N\to\N$ that is $\beta$-approximating for $\mu$.

    For the converse implication, let $f:\N\to\N$ be $\beta$-approximating for $\mu$. Since $(\beta_\mu(k,n))_n$ is a decreasing sequence for every $k\in\N$ and $f$ is  increasing
    \[
    \beta_\mu(k,\infty)=\inf_{k\in\N} \beta_\mu(k,f(k))=\lim_{k\to\infty} \beta_\mu(k,f(k))=0.
    \]
    Thus, by Corollary~\ref{Szego Corollary}, $\mu\in \Sz^c$.
\end{proof}
 Recall also, that for two sequences $\bm{\ell}:=(\ell_j)_j\in\N_0^\N, \mathbf{k}:=(k_j)_j\in\N^\N$, where $\mathbf{k}$ is strictly increasing, 
    \[
    \Lambda(\mathbf{k},\bm{\ell})=\bigcup_{j\in\N} \llbracket k_j,k_j+\ell_j\rrbracket.
    \]
    Note that the definition of $\Lambda(\mathbf{k},\bm{\ell})$ does not require the intervals $\llbracket k_j,k_j+\ell_j\rrbracket$ to be disjoint.
    
    Now we are ready to state the main theorem.
\begin{theorem}\label{Theorem beta}
Let $\mu\in\Sz^c$ and $f$ be any $\beta$-approximating function for $\mu$. Let $\mathbf{k}:=(k_j)_j\in\N^\N$ be strictly increasing and $\bm{\ell}:=(\ell_j)_j\in\N_0^\N$ such that 
    \[
    \lim_{j\to\infty} \ell_j-f(k_j)=\infty.
    \]  
    Then $E(\Lambda(\mathbf{k},\bm{\ell}))$ is dense in $L^2(\T,\mu)$.
\end{theorem}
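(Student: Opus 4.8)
The plan is to prove that the closed span $\overline{E(\Lambda(\mathbf{k},\bm{\ell}))}^{L^2(\T,\mu)}$ contains every monomial $z^n$, $n\in\N_0$. Since $\mu\in\Sz^c$, Corollary~\ref{Szego Corollary} tells us that $\PP$ is dense in $L^2(\T,\mu)$; hence once all $z^n$ lie in the closed span we get $\PP\subseteq\overline{E(\Lambda(\mathbf{k},\bm{\ell}))}^{L^2(\T,\mu)}$, and therefore $\overline{E(\Lambda(\mathbf{k},\bm{\ell}))}^{L^2(\T,\mu)}=L^2(\T,\mu)$, which is the assertion.

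The key point is a simple isometry observation: since $|z|=1$ on $\T$, multiplication by $z^{m}$ ($m\in\N_0$) is an isometry on $L^2(\T,\mu)$. So I would fix $n\in\N_0$, and for each $j$ large enough that $k_j>n$, pick $\pi_j\in\PP_{\leq f(k_j-n)}$ attaining the minimum defining $\beta_\mu(k_j-n,f(k_j-n))$. Multiplying the approximation $z^{-(k_j-n)}\approx\pi_j$ by $z^{k_j}$ gives
\[
\big\|z^n-z^{k_j}\pi_j(z)\big\|_{L^2(\T,\mu)}=\big\|z^{-(k_j-n)}-\pi_j(z)\big\|_{L^2(\T,\mu)}=\beta_\mu\big(k_j-n,f(k_j-n)\big).
\]
The polynomial $z^{k_j}\pi_j$ is a linear combination of monomials with exponents in $\llbracket k_j,k_j+f(k_j-n)\rrbracket$; since $f$ is increasing we have $f(k_j-n)\leq f(k_j)$, and since $\ell_j-f(k_j)\to\infty$ we have $f(k_j)\leq\ell_j$ for all large $j$, so these exponents lie in $\llbracket k_j,k_j+\ell_j\rrbracket\subseteq\Lambda(\mathbf{k},\bm{\ell})$; thus $z^{k_j}\pi_j\in E(\Lambda(\mathbf{k},\bm{\ell}))$. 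Finally, because $\mathbf{k}$ is strictly increasing, $k_j-n\to\infty$ as $j\to\infty$, and because $f$ is $\beta$-approximating for $\mu$ the right-hand side above tends to $0$. Hence $z^n\in\overline{E(\Lambda(\mathbf{k},\bm{\ell}))}^{L^2(\T,\mu)}$, and letting $n$ range over $\N_0$ completes the argument.

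I do not expect a genuine obstacle here; the only point requiring a little care is to approximate the correctly shifted monomial $z^{-(k_j-n)}$ rather than $z^{-k_j}$, so that after applying the isometry (multiplication by $z^{k_j}$) one lands exactly on $z^n$ while keeping the degree of the approximating polynomial bounded by $f(k_j)\leq\ell_j$. The hypothesis $\ell_j-f(k_j)\to\infty$ enters only through its consequence $f(k_j)\leq\ell_j$ for large $j$, which provides the room needed inside the block $\llbracket k_j,k_j+\ell_j\rrbracket$.
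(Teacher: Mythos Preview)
Your proof is correct and follows essentially the same strategy as the paper's: show that every monomial $z^n$ lies in the closed span by multiplying a polynomial approximant of a negative power of $z$ by a suitable $z^{m}$, using that $|z|=1$ makes this an isometry on $L^2(\T,\mu)$. The only difference is bookkeeping. The paper fixes $n$, takes $\pi_{k_j}\in\PP_{\leq f(k_j)}$ approximating $z^{-k_j}$, and multiplies by $z^{n+k_j}$; the resulting exponents lie in $\llbracket n+k_j,\,n+k_j+f(k_j)\rrbracket$, so containment in $\llbracket k_j,k_j+\ell_j\rrbracket$ genuinely requires $n\leq \ell_j-f(k_j)$ and hence the full hypothesis $\ell_j-f(k_j)\to\infty$. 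Your variant instead approximates $z^{-(k_j-n)}$ and multiplies by $z^{k_j}$, which, as you correctly note, only needs $f(k_j)\leq \ell_j$ for large $j$. So your argument in fact proves a slightly stronger statement than the one in the paper, with no extra work.
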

Let us first prove the following two corollaries.
\begin{corollary}\label{MainCorollary}
    Let $\mu\in \Sz^c$. Then there exists a set $\Lambda\subseteq \N$ with $\underline{\dd}(\Lambda)=0$ such that $E(\Lambda)$ is dense in $L^2(\T,\mu)$.
\end{corollary}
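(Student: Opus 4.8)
The plan is to deduce Corollary~\ref{MainCorollary} directly from Theorem~\ref{Theorem beta} by choosing the data $(\mathbf{k},\bm{\ell})$ so sparsely that the resulting set $\Lambda=\Lambda(\mathbf{k},\bm{\ell})$ has vanishing lower density. Since $\mu\in\Sz^c$, Corollary~\ref{CorollaryBetaApproximating} furnishes a $\beta$-approximating function $f:\N\to\N$; fix such an $f$. The freedom we exploit is that $\mathbf{k}$ may grow arbitrarily fast while $\bm\ell$ only needs to satisfy $\ell_j-f(k_j)\to\infty$, i.e.\ $\ell_j$ need only be slightly larger than $f(k_j)$ — and $f(k_j)$, while it may be large, is a \emph{fixed} number once $k_j$ is chosen. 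So the block $\llbracket k_j,k_j+\ell_j\rrbracket$ has some length, but we can place it very far out.

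Concretely, I would construct $(k_j)_j$ recursively. Set $\ell_j:=f(k_j)+j$ (this clearly forces $\ell_j-f(k_j)=j\to\infty$). Having chosen $k_1<\dots<k_{j-1}$, choose $k_j$ larger than $k_{j-1}$ and large enough that
\[
\frac{k_j+\ell_j}{\,?\,}
\]
is controlled — more precisely, large enough that, writing $N_j:=k_j+\ell_j=k_j+f(k_j)+j$, the total number of integers in $\Lambda$ up to $N_j$, which is at most $\sum_{i=1}^{j}(\ell_i+1)$, is at most $N_j/j$. Since $\sum_{i=1}^{j}(\ell_i+1)$ is a fixed finite quantity $S_j$ once $k_1,\dots,k_j$ are chosen, and $N_j\to\infty$ as $k_j\to\infty$ (with $j$ and the earlier terms held fixed), such a $k_j$ exists: just take $k_j\geq j\cdot S_j$. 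With this choice, for every $j$ we have $|\Lambda\cap\llbracket 1,N_j\rrbracket|/N_j\leq 1/j$, hence $\liminf_{N\to\infty}|\Lambda\cap\llbracket1,N\rrbracket|/N=0$, i.e.\ $\underline{\dd}(\Lambda)=0$. Meanwhile $\mathbf{k}$ is strictly increasing and $\ell_j-f(k_j)=j\to\infty$, so Theorem~\ref{Theorem beta} applies and $E(\Lambda)$ is dense in $L^2(\T,\mu)$.

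There is no real obstacle here; the only point requiring a moment's care is the bookkeeping that makes the recursion well-posed — namely that when we pick $k_j$ we already know $f(k_j)$ and hence $\ell_j$ and $N_j$, so the constraint ``$|\Lambda\cap\llbracket1,N_j\rrbracket|\le N_j/j$'' is a constraint on $k_j$ alone (through $N_j$) given the previously fixed blocks, and is satisfiable by taking $k_j$ sufficiently large. One should also note that the blocks $\llbracket k_i,k_i+\ell_i\rrbracket$ are automatically disjoint and increasing once $k_j>k_{j-1}+\ell_{j-1}$, which we may also demand; this is harmless and makes the count $|\Lambda\cap\llbracket1,N_j\rrbracket|=\sum_{i\le j}(\ell_i+1)$ exact rather than an inequality, though the inequality already suffices. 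Thus the corollary follows.
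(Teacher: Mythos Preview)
Your overall strategy is exactly the paper's: obtain a $\beta$-approximating $f$ via Corollary~\ref{CorollaryBetaApproximating}, set $\ell_j=f(k_j)+j$, build $\mathbf{k}$ recursively to force lower density zero, and invoke Theorem~\ref{Theorem beta}. The density bookkeeping, however, has a genuine gap.

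You evaluate the counting ratio at the checkpoints $N_j=k_j+\ell_j$ and ask that $S_j:=\sum_{i\le j}(\ell_i+1)\le N_j/j$. But $S_j$ is \emph{not} fixed while you vary the candidate $k_j$: it contains the term $\ell_j+1=f(k_j)+j+1$. Spelling out the inequality $jS_j\le N_j$ for $j\ge 2$ gives
\[
(j-1)\,f(k_j)\;\le\;k_j-j\sum_{i<j}(\ell_i+1)-j^2,
\]
which forces $f(k_j)<k_j$ up to bounded terms. For a general $\mu\in\Sz^c$ there is no reason such an $f$ exists; in the paper's own applications one has $f(k)=\lfloor k^t\rfloor$ with $t>1$ or $f(k)=\lfloor Ck\log k\rfloor$, so $f(k)\gg k$ and the inequality above is unsatisfiable for every $k_j$. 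The suggestion ``just take $k_j\ge j\cdot S_j$'' is circular for the same reason: $S_j$ is a function of $k_j$, and the ratio $S_j/N_j$ tends to $1$, not $0$, as $k_j\to\infty$ whenever $f$ is superlinear.

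The paper sidesteps this by placing the checkpoint at $k_{j+1}-1$, i.e.\ just \emph{before} the next block begins, and bounding crudely $\big|\Lambda\cap\llbracket 1,k_{j+1}-1\rrbracket\big|\le k_j+\ell_j$. Since $k_{j+1}$ is chosen \emph{after} $k_j$ (hence after $\ell_j=f(k_j)+j$ is already determined), one may simply impose $(k_j+\ell_j)\,j+1\le k_{j+1}$, yielding $(k_j+\ell_j)/(k_{j+1}-1)\le 1/j\to 0$. Shifting your checkpoint by one index in this way repairs the argument.
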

\begin{proof}
    By Corollary \ref{CorollaryBetaApproximating} there exists $f:\N\to\N$ which is $\beta$-approximating for $\mu$. Set $k_1:=1$ and, for $j\in\N$, define $k_{j+1}$ recursively such that 
   \[
   (k_j+f(k_j)+j)j+1\leq k_{j+1}.
   \]
   Put $\ell_j:=f(k_j)+j$ for every $j\in\N$, $\bm{\ell}:=(\ell_j)_j$ and $\mathbf{k}:=(k_j)_j$. By Theorem~\ref{Theorem beta}, $E(\Lambda(\mathbf{k},\bm{\ell}))$ is dense in $L^2(\T,\mu)$.  Furthermore, we get the following estimate for~$\Lambda(\mathbf{k},\bm{\ell})$. 
\begin{align*}
    \underline{\dd}(\Lambda(\mathbf{k},\bm{\ell}))&\leq \liminf_{j\to\infty} \frac{\big|\Lambda(\mathbf{k},\bm{\ell})\cap \llbracket 1,k_{j+1}-1\rrbracket\,\big|}{k_{j+1}-1}\\[1em]
    &\leq \liminf_{j\to\infty} \frac{k_j+\ell_j}{k_{j+1}-1}\leq \liminf_{j\to\infty}\frac{1}{j}=0.\qedhere
\end{align*}
\end{proof}
\begin{corollary}\label{FiniteGamma}
    $(\N_0\backslash \Gamma,\Sz^c)\in\A$ for every finite $\Gamma\subseteq \N_0$. 
\end{corollary}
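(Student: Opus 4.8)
The plan is to deduce this as an immediate consequence of Theorem~\ref{Theorem beta}: the point is simply that a cofinite subset of $\N_0$ contains a set of the form $\Lambda(\mathbf{k},\bm{\ell})$ whose block lengths $\ell_j$ may be taken to grow as fast as we please, which is exactly the flexibility Theorem~\ref{Theorem beta} requires. So I would fix a finite $\Gamma\subseteq\N_0$ and a measure $\mu\in\Sz^c$; by the definition of $\A$ it suffices to show that $E(\N_0\backslash\Gamma)$ is dense in $L^2(\T,\mu)$, with $\mu$ arbitrary. Set $N:=1+\max(\Gamma\cup\{0\})$, so that $\Gamma\subseteq\llbracket 0,N-1\rrbracket$ and every integer $\geq N$ lies in $\N_0\backslash\Gamma$.

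Next I would invoke Corollary~\ref{CorollaryBetaApproximating}: since $\mu\in\Sz^c$, there exists a $\beta$-approximating function $f$ for $\mu$. Then define $k_j:=N+j-1$ and $\ell_j:=f(k_j)+j$ for $j\in\N$, and put $\mathbf{k}:=(k_j)_j$, $\bm{\ell}:=(\ell_j)_j$. One checks the hypotheses of Theorem~\ref{Theorem beta} directly: $\mathbf{k}\in\N^\N$ is strictly increasing, $\bm{\ell}\in\N_0^\N$, and $\ell_j-f(k_j)=j\to\infty$. Hence $E(\Lambda(\mathbf{k},\bm{\ell}))$ is dense in $L^2(\T,\mu)$. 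Finally, each block $\llbracket k_j,k_j+\ell_j\rrbracket$ consists of integers $\geq k_j\geq N$, so $\Lambda(\mathbf{k},\bm{\ell})\subseteq\{n\in\N_0:n\geq N\}\subseteq\N_0\backslash\Gamma$, which gives $E(\Lambda(\mathbf{k},\bm{\ell}))\subseteq E(\N_0\backslash\Gamma)$; therefore $E(\N_0\backslash\Gamma)$ is dense in $L^2(\T,\mu)$ as well (equivalently, this is the monotonicity in $\Lambda$ recorded in Remark~\ref{ObviousFacts}). Since $\mu\in\Sz^c$ was arbitrary, $(\N_0\backslash\Gamma,\Sz^c)\in\A$.

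Two small points that I would note explicitly: the $\beta$-approximating function $f$, and hence the sequence $\bm{\ell}$, depends on $\mu$, but this is harmless since membership in $\A$ only requires density of $E(\N_0\backslash\Gamma)$ separately for each $\mu$ in the class; and the degenerate case $\Gamma=\emptyset$ is covered by the same recipe with $N=1$, where $\Lambda(\mathbf{k},\bm{\ell})\subseteq\N\subseteq\N_0$. I do not expect any genuine obstacle here — all the analytic content already resides in Theorem~\ref{Theorem beta} and Corollary~\ref{CorollaryBetaApproximating}, and this corollary is just the combinatorial observation that deleting finitely many integers from $\N_0$ still leaves room for a union of intervals with arbitrarily fast-growing lengths.
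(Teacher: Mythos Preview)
Your proof is correct and follows essentially the same approach as the paper: pick a $\beta$-approximating function $f$ via Corollary~\ref{CorollaryBetaApproximating}, construct sequences $\mathbf{k}$ and $\bm{\ell}$ with $k_j\geq 1+\max\Gamma$ and $\ell_j-f(k_j)\to\infty$, and apply Theorem~\ref{Theorem beta}. The only cosmetic difference is that the paper chooses $k_{j+1}=k_j+f(k_j)+j$ so that the intervals tile $\{n\geq N\}$ exactly, whereas you take $k_j=N+j-1$ and allow the intervals to overlap; since the definition of $\Lambda(\mathbf{k},\bm{\ell})$ does not require disjoint intervals, both choices work equally well.
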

\begin{proof}
For $\mu\in\Sz^c$ choose $f$ which is $\beta$-approximating for $\mu$. Set
    \[
    k_1:=\max_{\gamma\in\Gamma}\gamma+1,\quad k_{j+1}:=k_j+f(k_j)+j
    \]
    for all $j\in\N$. Set furthermore $\ell_j:=f(k_j)+j$ for all $j\in\N$. Then 
    \[
    \Lambda(\mathbf{k},\bm{\ell})=\N_0\backslash \llbracket 0,\max_{\gamma\in\Gamma}\gamma\rrbracket\subseteq \N_0\backslash\Gamma.
    \]
    By Theorem~\ref{Theorem beta}, $E(\Lambda(\mathbf{k},\bm{\ell}))$ is dense in $L^2(\T,\mu)$. Thus, also $E(\N_0\backslash \Gamma)$ is dense in $L^2(\T,\mu)$.     
\end{proof}
\begin{remark}
    Corollary \ref{FiniteGamma} is an extension of Szeg\ho's classical theorem. Szeg\ho's theorem is the case $\Gamma=\emptyset$.
    \end{remark}
Before we can prove the main theorem, we first prove two lemmas about approximating negative powers of $z$ with polynomials in $L^2(\T,\mu)$. Throughout, we will use the convention $\sum_{j\in J} a_j:=0$ if $J=\emptyset$.
\begin{lemma}\label{LemmaDistance}
    Let $\mu\in \Meas$. For all $k,n\in \N$ we have the estimate
    \begin{align}\label{Eq:EstimateBeta}
        \beta_\mu(k,n)&\leq \|\Phi_{n+1}\|_{L^2(\T,\mu)} \Bigg(1+\sum_{i=1}^{k-1}\;\:\sum_{\substack{ j_1,...,j_i\in \llbracket 1,n+1\rrbracket\\[0.5ex]j_1+...+j_i\leq k-1 }} \:|b_{n+1,n+1-j_1}|\cdot...\cdot |b_{n+1,n+1-j_i}| \Bigg).
    \end{align}
\end{lemma}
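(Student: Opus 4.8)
The plan is to bound $\beta_\mu(k,n)$ by constructing an explicit polynomial $\pi \in \PP_{\leq n}$ that approximates $z^{-k}$, exploiting the Szeg\H{o} recurrence to rewrite $z^{-k}$ modulo $\PP_{\leq n}$. The starting observation is that $\Phi_{n+1}$ has degree $n+1$ with leading coefficient $1$, so on $\T$ we may write $z^{n+1} = \Phi_{n+1}(z) - \sum_{j=1}^{n+1} b_{n+1,n+1-j} z^{n+1-j}$, i.e. dividing by $z^{n+1}$,
\[
1 = z^{-(n+1)}\Phi_{n+1}(z) - \sum_{j=1}^{n+1} b_{n+1,n+1-j}\, z^{-j}.
\]
The idea is to use this relation iteratively to express $z^{-k}$ (for $1 \leq k \leq n+1$) as a polynomial of degree $\leq n$ plus a controlled "error" built from $z^{-j}\Phi_{n+1}(z)$ terms, each of which has small $L^2(\T,\mu)$ norm since $\|z^{-j}\Phi_{n+1}\|_{L^2(\T,\mu)} = \|\Phi_{n+1}\|_{L^2(\T,\mu)}$ for $z \in \T$.

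First I would set up the iteration precisely. Multiplying the displayed identity by $z^{-(k-1)}$ gives $z^{-(k-1)} = z^{-(n+k)}\Phi_{n+1}(z) - \sum_{j=1}^{n+1} b_{n+1,n+1-j}\, z^{-(j+k-1)}$; this reduces a negative power $z^{-(k-1)}$ to a combination of $z^{-(j+k-1)}\Phi_{n+1}$ (an "error" term) and lower negative powers $z^{-(j+k-1)}$ with $j \geq 1$. I would then iterate this substitution: starting from $z^{-k}$, repeatedly replace every remaining negative power $z^{-m}$ by such an expansion, stopping a given branch as soon as the exponent becomes $\geq 0$ (at which point it is already in $\PP$, hence absorbed into $\pi$) or as soon as we have extracted a factor $\Phi_{n+1}$. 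Tracking the combinatorics: after $i$ substitution steps along one branch we pick up a coefficient $b_{n+1,n+1-j_1}\cdots b_{n+1,n+1-j_i}$ with each $j_r \in \llbracket 1, n+1 \rrbracket$, and the branch contributes to the error (rather than to $\pi$) only while the running exponent is still negative, which forces $j_1 + \cdots + j_i \leq k-1$. Each surviving error branch contributes a term of the form $(\pm)\,b_{n+1,n+1-j_1}\cdots b_{n+1,n+1-j_i}\, z^{-\cdot}\Phi_{n+1}(z)$, and the $i=0$ branch contributes the single term $z^{-k}\Phi_{n+1}(z)$ (the "$1+$" in the bound).

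Then I would collect the error terms: $z^{-k} - \pi(z) = \sum (\text{signed products of } b\text{'s}) \cdot z^{-\cdot}\Phi_{n+1}(z)$, and apply the triangle inequality in $L^2(\T,\mu)$ together with $\|z^{-m}\Phi_{n+1}\|_{L^2(\T,\mu)} = \|\Phi_{n+1}\|_{L^2(\T,\mu)}$ to get exactly \eqref{Eq:EstimateBeta}. Since $\pi$ has degree $\leq n$ by construction (every monomial we moved into $\pi$ had nonnegative exponent, and the largest exponent arising is $\leq n$ because we only ever expand genuinely negative powers and $\Phi_{n+1}$ is peeled off before the exponent can climb), this $\pi$ is admissible in the minimization defining $\beta_\mu(k,n)$, giving the inequality.

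The main obstacle will be bookkeeping: making the iteration/recursion rigorous — specifying exactly when a branch terminates, verifying that the extracted polynomial part genuinely has degree $\leq n$, and checking that the multiset of surviving error branches is in bijection with the index set $\{(j_1,\dots,j_i) : i \leq k-1,\ j_r \in \llbracket 1,n+1\rrbracket,\ \sum j_r \leq k-1\}$ appearing on the right-hand side, with each contributing norm at most $\|\Phi_{n+1}\|_{L^2(\T,\mu)}$ times the product of the $|b|$'s. A clean way to organize this is by induction on $k$: prove that $z^{-k}$ equals a degree-$\leq n$ polynomial plus $\Phi_{n+1}(z)$ times a Laurent polynomial whose coefficients are sums of such signed products, where the inductive step applies the base identity once and invokes the hypothesis for the $k-j$ with $1 \le j \le k-1$ (the terms with $j \ge k$ land in $\PP_{\le n}$ directly). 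Everything else — the norm invariance on $\T$ and the triangle inequality — is routine.
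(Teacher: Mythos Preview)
Your high-level strategy---peel off a copy of the degree-$(n+1)$ orthogonal polynomial at each step, split into an ``error'' piece of norm $\|\Phi_{n+1}\|_{L^2(\T,\mu)}$ and residual negative powers, then induct on $k$---is exactly what the paper does. But the concrete identity you wrote down runs the iteration in the wrong direction and, as stated, never terminates.

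From $1 = z^{-(n+1)}\Phi_{n+1}(z) - \sum_{j=1}^{n+1} b_{n+1,n+1-j}\, z^{-j}$ you multiply by $z^{-(k-1)}$ and obtain exponents $-(j+k-1)$ with $j\ge 1$: these are \emph{more} negative than $-(k-1)$, not less, so no branch ever reaches a nonnegative exponent and nothing ever lands in $\PP_{\le n}$. Your subsequent combinatorial bookkeeping (``the running exponent is still negative forces $j_1+\cdots+j_i\le k-1$'', ``invoke the hypothesis for $k-j$'', ``terms with $j\ge k$ land in $\PP_{\le n}$'') is describing the \emph{opposite} recursion, in which each step \emph{adds} $j_r$ to the exponent. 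That recursion is obtained from the conjugate of your identity on $\T$, namely
\[
1=\Phi^{*}_{n+1}(z)-\sum_{m=1}^{n+1}\overline{b_{n+1,n+1-m}}\,z^{m},
\]
so that multiplying by $z^{-k}$ gives
\[
z^{-k}=z^{-k}\Phi^{*}_{n+1}(z)-\sum_{m=1}^{n+1}\overline{b_{n+1,n+1-m}}\,z^{\,m-k}.
\]
Now the error term $z^{-k}\Phi^{*}_{n+1}$ has norm $\|\Phi_{n+1}\|_{L^2(\T,\mu)}$, the terms with $m\ge k$ have exponent $0\le m-k\le n$ and go into $\pi\in\PP_{\le n}$, and the terms with $1\le m\le k-1$ are $z^{-(k-m)}$ with strictly smaller negative exponent---exactly matching your inductive description and the index set in \eqref{Eq:EstimateBeta}. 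This is precisely the paper's argument: it uses $\Phi^{*}_{n+1}$ (equivalently, works with $\overline{b_{n+1,n+1-m}}$, which is immaterial for the bound since $|b|=|\bar b|$) and organizes the step via the triangle inequality and the induction hypothesis for $\beta_\mu(k-m,n)$. Once you swap to the $\Phi^{*}_{n+1}$ identity, everything you wrote after ``Tracking the combinatorics'' goes through verbatim.
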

\begin{lemma}\label{LemmaDistance2}
    Let $\mu\in\Meas$. Then, for all $k,n\in\N$
    \[
    \beta_\mu(k,n)\leq\|\Phi_{n+1}\|_{L^2(\T,\mu)} (2n+2)^{k-1}.
    \]
\end{lemma}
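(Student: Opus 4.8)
The plan is to deduce this directly from Lemma~\ref{LemmaDistance}: the right-hand side of \eqref{Eq:EstimateBeta} is $\|\Phi_{n+1}\|_{L^2(\T,\mu)}$ times a quantity built from the coefficients $b_{n+1,i}$ of the monic orthogonal polynomial $\Phi_{n+1}$, so it suffices to bound each such coefficient and then to count the index tuples appearing in the double sum. Concretely, I would first establish a crude bound of the form $|b_{n+1,n+1-j}|\leq (n+1)^{j}$ for $j\in\llbracket 1,n+1\rrbracket$, substitute it, and finally estimate the resulting combinatorial sum by a geometric series.

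For the coefficient bound I would invoke the classical fact that all zeros of $\Phi_{n+1}(\mu;\cdot)$ lie in the open unit disk (see \cite{SimonOPUC1}*{Theorem~1.7.1}). Writing $\Phi_{n+1}(\mu;z)=\prod_{r=1}^{n+1}(z-z_r)$ with $|z_r|<1$, the coefficient $b_{n+1,m}$ equals, up to a sign, the elementary symmetric polynomial $e_{n+1-m}(z_1,\dots,z_{n+1})$, hence
\[
|b_{n+1,m}|\leq e_{n+1-m}(1,\dots,1)=\binom{n+1}{n+1-m}\leq (n+1)^{n+1-m}.
\]
In particular $|b_{n+1,n+1-j}|\leq (n+1)^{j}$, so for any tuple with $j_1,\dots,j_i\in\llbracket 1,n+1\rrbracket$ and $j_1+\dots+j_i\leq k-1$ the corresponding product of coefficients is at most $(n+1)^{j_1+\dots+j_i}\leq (n+1)^{k-1}$.

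It then remains to count the tuples. Since every $j_\ell\geq 1$, a tuple with $j_1+\dots+j_i\leq k-1$ has length $i\leq k-1$; for a fixed value $s$ of the sum there are $\binom{s-1}{i-1}$ compositions of $s$ into $i$ positive parts, and summing over $i$ and then over $s\in\llbracket 1,k-1\rrbracket$ — dropping the harmless upper constraint $j_\ell\leq n+1$, which only increases the count — gives at most $\sum_{s=1}^{k-1}2^{s-1}=2^{k-1}-1$ tuples in total. Inserting both estimates into \eqref{Eq:EstimateBeta} yields
\[
\beta_\mu(k,n)\leq \|\Phi_{n+1}\|_{L^2(\T,\mu)}\Big(1+(2^{k-1}-1)(n+1)^{k-1}\Big)\leq \|\Phi_{n+1}\|_{L^2(\T,\mu)}\,(2n+2)^{k-1},
\]
where the last inequality holds because $(2n+2)^{k-1}=2^{k-1}(n+1)^{k-1}=(2^{k-1}-1)(n+1)^{k-1}+(n+1)^{k-1}$ and $(n+1)^{k-1}\geq 1$. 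There is no genuine obstacle here: the only non-elementary input is the location of the zeros of $\Phi_{n+1}$, and the only things to watch are the bookkeeping of the composition count and the degenerate case $k=1$, in which \eqref{Eq:EstimateBeta} and the claim both collapse to $\beta_\mu(1,n)\leq\|\Phi_{n+1}\|_{L^2(\T,\mu)}$.
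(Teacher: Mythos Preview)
Your proof is correct and follows essentially the same route as the paper: bound $|b_{n+1,n+1-j}|$ via the zeros-in-$\D$ fact by $\binom{n+1}{j}\leq (n+1)^j$, pull out $(n+1)^{k-1}$, and count the admissible tuples as compositions to get the factor $2^{k-1}-1$. The only cosmetic difference is the order of the combinatorial summation—you sum over the number of parts first to obtain $2^{s-1}$ and then a geometric series, whereas the paper sums over the total first via the hockey-stick identity and then the binomial theorem—but the two are equivalent.
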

\begin{proof}[Proof of Lemma \ref{LemmaDistance}]
     The proof is by induction. For $k=1$ and $n\in \N$ we show
     \[
     \beta_\mu(1,n)=\|\Phi_{n+1}\|_{L^2(\T,\mu)}.
     \]
     This is a standard result in OPUC theory but the proof is included here for completeness.
    
    For a subspace $V\subseteq L^2(\T,\mu)$ denote by $P_V$ the orthogonal projection onto $V$. 
    \begin{align*}
        \beta_\mu(1,n)&=\min\big\{\|z^{-1}-\pi(z)\|_{L^2(\T,\mu)}\mid \pi\in \PP_{\leq n} \big\}\\
        &=\min\big\{\|1-\pi(z)\|_{L^2(\T,\mu)}\mid \pi\in z\cdot\PP_{\leq n}\big\}\\
        &=\|P_{(z\cdot\PP_{\leq n})^\perp}(1)\|_{L^2(\T,\mu)}.
    \end{align*}
    Note that $\Phi^{*}_{n+1}(0)=b_{n+1,n+1}=1$ and thus
    \[
    \Phi^{*}_{n+1}(z)-1=\sum_{i=1}^{n+1} \overline{b_{n+1,n+1-i}}z^i\in z\cdot\PP_{\leq n}.
    \]
    Furthermore, for all $\pi\in\PP_{\leq n}$,
    \begin{align*}
        \langle z\pi,\Phi^{*}_{n+1}\rangle_{L^2(\T,\mu)}=\langle z\pi,z^{n+1}\overline{\Phi_{n+1}}\rangle_{L^2(\T,\mu)}=\langle \Phi_{n+1}, z^n\overline{\pi}\rangle_{L^2(\T,\mu)}=0
    \end{align*}
    since $\Phi_{n+1}\perp \PP_{\leq n}$. Thus $P_{(z\cdot\PP_{\leq n})^\perp}(1)=\Phi^{*}_{n+1}$ and therefore
    \[
     \beta_\mu(1,n)=\|\Phi^{*}_{n+1}\|_{L^2(\T,\mu)}=\|\Phi_{n+1}\|_{L^2(\T,\mu)}.
    \]
    Now, we make an induction in $k$. Assume that for all $n\in\N$ one can estimate $\beta_\mu(1,n),\ldots,\beta_\mu(k,n)$ from above by the right-hand side in \eqref{Eq:EstimateBeta}. By applying the triangle inequality one gets for all $n\in\N$
    \begin{align}\label{EQ:TriangleInequalityBetaEstimate}
        \beta_\mu(k+1,n)&\leq \|z^{-(k+1)}+\sum_{m=1}^{n+1} \overline{b_{n+1,n+1-m}}\,P_{
        (\PP_{\leq n})}(z^{m-(k+1)})\|_{L^2(\T,\mu)}\nonumber\\[1ex]
        &\leq  \|z^{-(k+1)}+\sum_{m=1}^{n+1} \overline{b_{n+1,n+1-m}}\,z^{m-(k+1)}\|_{L^2(\T,\mu)}\nonumber\\[1ex]
        &\hspace{0.9cm}+\sum_{m=1}^{\min
        (k,n+1)} |b_{n+1,n+1-m}|\cdot \|z^{m-(k+1)}-P_{
        (\PP_{\leq n})}(z^{m-(k+1)})\|_{L^2(\T,\mu)}\nonumber\\[1ex]
        &=\|z^{-(k+1)}+\sum_{m=1}^{n+1} \overline{b_{n+1,n+1-m}}\,z^{m-(k+1)}\|_{L^2(\T,\mu)}\nonumber\\[1ex]
        &\hspace{1.9cm}+\sum_{m=1}^{\min(k,n+1)} |b_{n+1,n+1-m}|\, \beta_\mu(k+1-m,n).
    \end{align}
    The first summand on the right-hand side of \eqref{EQ:TriangleInequalityBetaEstimate} equals
    \begin{equation}\label{Eq:FirstSummandBetaEstimate}
    \|z^{-(k+1)}+z^{-(k+1)}(\Phi^{*}_{n+1}(z)-1)\|_{L^2(\T,\mu)}=\|\Phi^{*}_{n+1}(z)\|_{L^2(\T,\mu)}=\|\Phi_{n+1}\|_{L^2(\T,\mu)} .
    \end{equation}
    By the induction hypothesis, each $\beta_\mu(k+1-m,n)$ in the second summand on the right-hand side of \eqref{EQ:TriangleInequalityBetaEstimate} can be bounded from above by the following expression 
    \begin{align}\label{Eq:BetaEstimateCalculation}
    \|\Phi_{n+1}\|_{L^2(\T,\mu)}\Big(1+\sum_{i=1}^{k-1}\sum_{\substack{  j_1,\ldots,j_i\in\llbracket 1,n+1\rrbracket,\\[0.5ex]j_1+...+j_i\leq k-m }}\ |b_{n+1,n+1-j_1}|\cdot...\cdot |b_{n+1,n+1-j_i}|\Big).
    \end{align}
    Note that when applying the induction hypothesis to $\beta_\mu(k+1-m,n)$ we would a priori get a sum in $i$ that goes from 1 to $k-m$. The reason why the sum in \eqref{Eq:BetaEstimateCalculation} goes from 1 to $k-1$ instead is that for $i>k-m$ the index set 
    \[
    \{j_1,\ldots,j_i\in\llbracket 1,n+1\rrbracket\mid j_1+...+j_i\leq k-m\}
    \]
    is empty and thus the sum over this index set equals zero. 
    By \eqref{Eq:BetaEstimateCalculation} and \eqref{EQ:TriangleInequalityBetaEstimate}, the second summand on the right-hand side of \eqref{EQ:TriangleInequalityBetaEstimate} is bounded from above by
    \begin{align}\label{Eq:BetaEstimateCalculation3}
       &\|\Phi_{n+1}\|_{L^2(\T,\mu)} \Bigg(\sum_{m=1}^{\min(k,n+1)} |b_{n+1,n+1-m}|\nonumber\\[1ex]
    &\hspace{1cm}+\sum_{m=1}^{\min(k,n+1)}\sum_{i=1}^{k-1} \sum_{\substack{  j_1,\ldots,j_i\in\llbracket 1,n+1\rrbracket,\\[0.5ex]j_1+...+j_i\leq k-m }} |b_{n+1,n+1-m}|\cdot|b_{n+1,n+1-j_1}|\cdot...\cdot |b_{n+1,n+1-j_i}|\Bigg).
    \end{align}
    By rearranging the summands in \eqref{Eq:BetaEstimateCalculation3} we get
    \begin{align}\label{Eq:BetaEstimateCalculation2}
    &\|\Phi_{n+1}\|_{L^2(\T,\mu)} \Bigg(\sum_{m=1}^{\min(k,n+1)} |b_{n+1,n+1-m}|\nonumber\\[1ex]
    &\hspace{1cm}+\sum_{i=1}^{k-1}\sum_{m=1}^{\min(k,n+1)} \sum_{\substack{  j_1,\ldots,j_i\in\llbracket 1,n+1\rrbracket,\\[0.5ex]j_1+...+j_i\leq k-m }} |b_{n+1,n+1-m}|\cdot|b_{n+1,n+1-j_1}|\cdot...\cdot |b_{n+1,n+1-j_i}|\Bigg)\nonumber\\[1ex]
    &=\|\Phi_{n+1}\|_{L^2(\T,\mu)} \Bigg(\sum_{m=1}^{\min(k,n+1)} |b_{n+1,n+1-m}|\nonumber\\[1ex]
    &\hspace{3cm}+\sum_{i=2}^{k} \sum_{\substack{  j_1,\ldots,j_i\in\llbracket 1,n+1\rrbracket,\\[0.5ex]j_1+...+j_i\leq k }} |b_{n+1,n+1-j_1}|\cdot...\cdot |b_{n+1,n+1-j_i}|\Bigg).
    \end{align}
    In \eqref{Eq:BetaEstimateCalculation2}, the sum in $m$ is equal to the sum in $j_1,\ldots,j_i$ in the case $i=1$. Thus, the right-hand side in \eqref{Eq:BetaEstimateCalculation2} equals
    \begin{equation*}
        \|\Phi_{n+1}\|_{L^2(\T,\mu)} \sum_{i=1}^{k}\sum_{\substack{  j_1,\ldots,j_i\in\llbracket 1,n+1\rrbracket,\\[0.5ex]j_1+...+j_i\leq k }} |b_{n+1,n+1-j_1}|\cdot...\cdot |b_{n+1,n+1-j_i}|.
    \end{equation*}
    Now we have an upper bound for the second summand in the right-hand side of \eqref{EQ:TriangleInequalityBetaEstimate}. In \eqref{Eq:FirstSummandBetaEstimate} we asserted that the first summand in the right-hand side of \eqref{EQ:TriangleInequalityBetaEstimate} equals $\|\Phi_{n+1}\|_{L^2(\T,\mu)}$. Putting both together we obtain the desired upper bound 
    \begin{align*}
         &\beta_\mu(k+1,n)\\
         &\leq \|\Phi_{n+1}\|_{L^2(\T,\mu)} \Bigg(1+\sum_{i=1}^{k}\:\; \sum_{\substack{  j_1,\ldots,j_i\in\llbracket 1,n+1\rrbracket,\\[0.5ex]j_1+...+j_i\leq k }}\: |b_{n+1,n+1-j_1}|\cdot...\cdot |b_{n+1,n+1-j_i}|\Bigg).\qedhere
    \end{align*}
\end{proof}
\begin{proof}[Proof of Lemma \ref{LemmaDistance2}]
    Let $w_1,...,w_n$ be the zeros of $\Phi_n$. Since the zeros of monic orthogonal polynomials on the unit circle are contained in the unit disk \cite{SimonOPUC1}*{Theorem~1.7.1} we have 
    \begin{equation}\label{Eq:BetaBinomialCoefficient}
    |b_{n,k}|\leq \sum_{\substack{J\subseteq \llbracket 1,n\rrbracket,\: |J|=n-k}}\; \;\prod_{j\in J} |w_j|\leq \binom{n}{k}.
    \end{equation}
    Together with Lemma \ref{LemmaDistance} we get a new estimate for $\beta_\mu(k,n)$ by using  \eqref{Eq:BetaBinomialCoefficient} to bound each coefficient $b_{n+1,n+1-j_i}$ in the right-hand side of \eqref{Eq:EstimateBeta} 
    \begin{align*}
         \beta_\mu(k,n)&\leq \|\Phi_{n+1}\|_{L^2(\T,\mu)}\Bigg( 1+\sum_{i=1}^{k-1}\;\:\sum_{\substack{ j_1,...,j_i\in \llbracket1,n
         +1\rrbracket\\[0.5ex]j_1+...+j_i\leq k-1 }} \:\binom{n+1}{j_1}\cdot...\cdot \binom{n+1}{j_i}\Bigg).
    \end{align*}
    For each of the binomial coefficients, we now use the bound $\binom{n}{k}\leq n^k$. In general, this bound is very wasteful, but in this case, one always gets the summand $
    (n+1)^{k-1}$ from the tuple $(j_1,\ldots,j_{k-1})$ with entries $j_i=1$ for each $i\in\llbracket 1,k-1\rrbracket$. Hence,
    \begin{align}\label{Eq:LemmaBetasEstimate}
         \beta_\mu(k,n)&\leq \|\Phi_{n+1}\|_{L^2(\T,\mu)} \Bigg( 1+ \sum_{i=1}^{k-1}\;\: \sum_{\substack{ j_1,...,j_i\in \llbracket1,n+1\rrbracket\\[0.5ex]j_1+...+j_i\leq k-1 }} \:(n+1)^{j_1}\cdot...\cdot (n+1)^{j_i}\Bigg)\nonumber\\
         &\leq \|\Phi_{n+1}\|_{L^2(\T,\mu)}  \Bigg( 1+\sum_{i=1}^{k-1}\;\:\sum_{\substack{ j_1,...,j_i\in \llbracket1,n+1\rrbracket\\[0.5ex]j_1+...+j_i\leq k-1 }}(n+1)^{k-1}\Bigg)\nonumber\\
         &= \|\Phi_{n+1}\|_{L^2(\T,\mu)}  \Bigg( 1+(n+1)^{k-1}\sum_{i=1}^{k-1}\;\:\sum_{\ell=i}^{k-1}\;\:\sum_{\substack{ j_1,...,j_i\in \llbracket1,\ell\rrbracket\\[0.5ex]j_1+...+j_i=\ell }}1\Bigg).
    \end{align}
   The last sum in \eqref{Eq:LemmaBetasEstimate} now counts the number of tuples $(j_1,\ldots,j_i)$ with entries in $\N$ such that $\sum_{s=1}^i j_s=\ell$. This number is $\binom{\ell-1}{i-1}$ since one has to count how many ways there are to draw $i-1$ separating lines between $\ell$ 1's. Thus, from \eqref{Eq:LemmaBetasEstimate} we get the following estimate for $\beta_\mu(k,n)$
   \begin{equation}\label{Eq:LemmaBetaEstimateNo2}
   \beta_\mu(k,n)\leq\|\Phi_{n+1}\|_{L^2(\T,\mu)}\, \Bigg( 1+(n+1)^{k-1}\sum_{i=1}^{k-1}\;\:\sum_{\ell=i}^{k-1}\;\:\binom{\ell-1}{i-1}\Bigg).
   \end{equation}
   To simplify the right-hand side, we use the following combinatorial identity
   \begin{equation}\label{Eq:CombinatoricIdentity}
       \sum_{\ell=i}^{k-1}\;\:\binom{\ell-1}{i-1}=\binom{k-1}{i}
   \end{equation}
   for $1\leq i\leq k-1$. The identity follows by induction in $k$ from Pascal's rule for binomial coefficients. Indeed, if we assume \eqref{Eq:CombinatoricIdentity} holds true for some $k\geq 2$, then
   \[
   \sum_{\ell=i}^{k}\;\:\binom{\ell-1}{i-1}=\sum_{\ell=i}^{k-1}\;\:\binom{\ell-1}{i-1}+\binom{k-1}{i-1}=\binom{k-1}{i}+\binom{k-1}{i-1}=\binom{k}{i}
   \]
   where the last equation is Pascal's rule.
   Combining \eqref{Eq:CombinatoricIdentity} and \eqref{Eq:LemmaBetaEstimateNo2} yields
   \begin{align*}
       \beta_\mu(k,n)&\leq\|\Phi_{n+1}\|_{L^2(\T,\mu)} \, \Bigg( 1+(n+1)^{k-1}\sum_{i=1}^{k-1}\;\binom{k-1}{i}\Bigg)\\
       &=\|\Phi_{n+1}\|_{L^2(\T,\mu)}\,\big(1+(n+1)^{k-1}(2^{k-1}-1)\big)\\
       &\leq \|\Phi_{n+1}\|_{L^2(\T,\mu)}\,(2n+2)^{k-1}
   \end{align*}
   which is the estimate we wanted to show.
\end{proof}
\begin{proof}[Proof of Theorem~\ref{Theorem beta}]
   Without loss of generality
    \begin{equation}\label{ConditionSequence}
        k_j+\ell_j<k_{j+1}
    \end{equation}
    for every $j\in\N$. Otherwise one passes to subsequences $\mathbf{k}':=(k_{n_j})_j$ and $\bm{\ell}':=(\ell_{n_j})_j$  that fulfill condition \eqref{ConditionSequence}. Since
    $\Lambda(\mathbf{k}',\bm{\ell}')\subseteq \Lambda(\mathbf{k},\bm{\ell})$,
     the density of $E(\Lambda(\mathbf{k}',\bm{\ell}'))$ implies the density of $E(\Lambda(\mathbf{k},\bm{\ell}))$ in $L^2(\T,\mu)$. Thus, we can assume \eqref{ConditionSequence}.
     
     For $k\in \N$ let $\pi_k\in\PP_{\leq f(k)}$ such that 
    \[
        \|z^{-k}-\pi_k\|_{L^2(\T,\mu)}=\min_{\pi\in \PP_{\leq f(k)}}\|z^{-k}-\pi(z)\|_{L^2(\T,\mu)}=\beta_\mu(k,f(k)).
    \]
By Theorem~\ref{Szego Theorem} it suffices to show that 
    \[ \overline{E(\N_0)}^{L^2(\T,\mu)}=\overline{E(\Lambda(\mathbf{k},\bm{\ell}))}^{L^2(\T,\mu)}.
   \]
   To this end, let $k\in\N_0$. We need to show that $z^{k}\in\overline{E(\Lambda(\mathbf{k},\bm{\ell}))}^{L^2(\T,\mu)}$.
   
   Let $J\in\N$ such that
   \begin{equation*}
   k\leq \ell_j-f(k_j)
   \end{equation*} 
   for every $j\geq J$.
   Then, for every $j\geq J$,
   \[
   k_j\leq k+k_j+\mathrm{deg}(\pi_{k_j})\leq k+k_j+f(k_j)\leq k_j+\ell_j.
   \]
It follows that
   \begin{align*}
       z^{k+k_j}\pi_{k_j}\in \mathrm{span}\{z^m\mid k_j\leq m\leq k_j+\ell_j\}\subseteq E(\Lambda(\mathbf{k},\bm{\ell}))
   \end{align*}
   for every $j\geq J$. Furthermore, for every $j\in\N$,
    \[
   \|z^{k}-z^{k+k_j}\pi_{k_j}\|_{L^2(\T,\mu)}= \|z^{-k_j}-\pi_{k_j}\|_{L^2(\T,\mu)}=\beta_\mu(k_j,f(k_j)).
   \]
   Since $f$ is $\beta$-approximating for $\mu$,
   \[
   \lim_{j\to\infty} \|z^{k}-z^{k+k_j}\pi_{k_j}\|_{L^2(\T,\mu)}=0.
   \]
   Thus, $z^{k}\in\overline{E(\Lambda(\mathbf{k},\bm{\ell}))}^{L^2(\T,\mu)}$ which implies that $E(\Lambda(\mathbf{k},\bm{\ell}))$ is dense in $L^2(\T,\mu)$. 
\end{proof}
\section{Applications}
We now want to prove Theorem~\ref{Existence of infinite sets}.
\begin{proof}[Proof of Theorem~\ref{Existence of infinite sets}]
Let $\mu\in\MMeas$ and let $1<t<s$. Put 
\[
f:\N\to\N,\quad f(n):=\lfloor n^{t}\rfloor.
\]
We first show that $f$ is $\beta$-approximating for $\mu$. By Lemma~\ref{LemmaMarkoffMeasure} there exist $\varepsilon>0$ and $\ell\in\N$ such that 
\[
\|\Phi_n\|^2_{L^2(\T,\mu)}\leq(1-\varepsilon^2)^{\frac{n-1}{\ell}-1}
\]
for all $n\in\N$. Thus, for every $k\in\N$,
    \begin{align*}
       \|\Phi_{\lfloor k^t\rfloor+1}\|_{L^2(\T,\mu)}(2\lfloor k^t\rfloor+2)^{k-1}&\leq (1-\varepsilon^2)^{\frac{\lfloor k^{t}\rfloor}{2\ell}-\frac{1}{2}}  (2\lfloor k^t\rfloor +2)^{k-1}\\
       &\leq (1-\varepsilon^2)^{\frac{ k^{t}}{2\ell}-\frac{1}{2}-\frac{1}{2\ell}}  (2k^t +2)^{k-1}\\
       &\leq\frac{(1-\varepsilon^2)^{-\frac{1}{2}-\frac{1}{2\ell}}}{2k^t+2}\Big((1-\varepsilon^2)^{\frac{ k^{t-1}}{2\ell}}(2k^{t}+2)\Big)^{k}.
   \end{align*}
   The right-hand side converges to 0 when $k\to \infty$. Thus, by Lemma~\ref{LemmaDistance2}, $f$ is $\beta$-approximating for $\mu$. Furthermore,
    \[
    \lim_{j\to\infty} \lfloor k_j^s\rfloor -f(k_j)=\lim_{j\to\infty} \lfloor k_j^s\rfloor -\lfloor k_j^t\rfloor=\infty
    \]
   since $t<s$. Hence, by Theorem~\ref{Theorem beta}, $E(\Lambda(\lfloor \mathbf{k},\lfloor \mathbf{k}^s\rfloor))$ is dense in $L^2(\T,\mu)$. Finally, if one chooses $\mathbf{k}$ such that for every $j\in\N$
   \[
   (k_j+\lfloor k_j^s\rfloor)j+1\leq k_{j+1}
   \]
   then
    \begin{align*}
    \underline{\dd}(\Lambda(\mathbf{k},\lfloor \mathbf{k}^s\rfloor))&\leq \liminf_{j\to\infty} \frac{\big|\Lambda(\mathbf{k},\lfloor \mathbf{k}^s\rfloor)\cap \llbracket 1,k_{j+1}-1\rrbracket\,\big|}{k_{j+1}-1}\\[1em]
    &\leq \liminf_{j\to\infty} \frac{k_j+\lfloor k_j^s\rfloor}{k_{j+1}-1}\leq \liminf_{j\to\infty}\frac{1}{j}=0.\qedhere
\end{align*}
\end{proof}
\begin{remark}
    Comparing the results from \cite{OlevskiiUlanovskii} with ours, one can have \linebreak${(\Lambda,\MMeas)\in\A}$ for sets $\Lambda$ that are much sparser than any known set $\Lambda$ for which one has ${(\Lambda,\mathcal{W}\cap\Sz^c)\in\A}$. To be more precise, in \cite{OlevskiiUlanovskii} it was proven that $(\N_0\setminus \Gamma,\mathcal{W}\cap\Sz^c)\in\A$ if
    \[
    \sum_{\gamma\in\Gamma} \frac{1}{\sqrt{\gamma}}<\infty.
    \]
    This implies $\underline{\dd}(\N_0\backslash\Gamma)=1$ since for every $M\in\N$ one has 
    \begin{align*}
       \infty> \sum_{\gamma\in\Gamma} \frac{1}{\sqrt{\gamma}}&\geq \limsup_{N\to\infty} |\Gamma\cap\llbracket1,N\rrbracket|\cdot \frac{1}{\sqrt{N}}\geq \overline{\dd}(\Gamma)\sqrt{M}.
    \end{align*}
    Thus, by letting $M\to\infty$, it follows that $\overline{\dd}(\Gamma)=1-\underline{\dd}(\N_0\backslash\Gamma)=0$.
    
    For the class $\MMeas$ by contrast, there exists $\Lambda\subseteq \N$ with $\underline{\dd}(\Lambda)=0$ such that $(\Lambda,\MMeas)\in\A$ by Theorem~\ref{Existence of infinite sets}.
\end{remark}
We now look at the class $\mathrm{Mar}_{\varepsilon,\ell}(\T)$ introduced in \eqref{markoffmeasure}. In this class one can find an even ‘thinner' set $\Lambda(\mathbf{k},\bm{\ell})$ than the one in Theorem~\ref{Existence of infinite sets} such that 
\[
\big(\Lambda(\mathbf{k},\bm{\ell}),\mathrm{Mar}_{\varepsilon,\ell}(\T)\big)\in\A.
\]
To be more precise, in the situation when $\mu\in\MMeas$ (Theorem~\ref{Existence of infinite sets}), there are $\beta$-approximating functions $f$ with
\[
f(k)=O(k^s),\quad (k\to\infty)
\]
for any $s>1$. 
However, when $\mu\in\mathrm{Mar}_{\varepsilon,\ell}(\T)$ (Corollary~\ref{Existence of parameter dependent infinite sets}), there exist $f$ which are $\beta$-approximating for $\mu$ with
\[
f(k)=O(\log(k)k),\quad (k\to\infty)
\]
 where the implicit constant depends on $\varepsilon$ and $
\ell$. 
\begin{corollary}\label{Existence of parameter dependent infinite sets}
    Let $\varepsilon>0, \ell\in\N$, $t>2$ and let $\mathbf{k}:=(k_j)_{j\in\N}\in\N^\N$ be strictly increasing. Put 
    \[
    C_{\varepsilon,\ell}:=\frac{-\ell}{\log(1-\varepsilon^2)}
    \]
    and $\lfloor t\,C_{\varepsilon,\ell}\log(\mathbf{k})\, \mathbf{k}\rfloor:=\big(\lfloor t\,C_{\varepsilon,\ell}\log(k_j)\, k_j\rfloor\big)_j$.
    Then, 
    \[
    \Big(\Lambda(\mathbf{k},\lfloor t\,C_{\varepsilon,\ell}\log(\mathbf{k})\, \mathbf{k}\rfloor),\, \mathrm{Mar}_{\varepsilon,\ell-1}(\T)\Big)\in\A.
    \]
\end{corollary}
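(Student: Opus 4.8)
The plan is to follow the same two-step strategy that produced Theorem~\ref{Existence of infinite sets}: first exhibit a $\beta$-approximating function for every $\mu\in\mathrm{Mar}_{\varepsilon,\ell-1}(\T)$ of size roughly $C_{\varepsilon,\ell}\log(k)\,k$, and then invoke Theorem~\ref{Theorem beta} with the gap condition $\ell_j-f(k_j)\to\infty$. Concretely, set $f(k):=\lfloor t' C_{\varepsilon,\ell}\log(k)\,k\rfloor$ for some intermediate exponent $2<t'<t$. Let $\mu\in\mathrm{Mar}_{\varepsilon,\ell-1}(\T)$, so that by Lemma~\ref{LemmaMarkoffMeasure}~b) (applied with the parameter $\ell-1$, so that $(\ell-1)+1=\ell$ appears in the exponent) we have $\|\Phi_n\|^2_{L^2(\T,\mu)}\leq (1-\varepsilon^2)^{\frac{n-1}{\ell}-1}$, hence $\|\Phi_n\|_{L^2(\T,\mu)}\leq (1-\varepsilon^2)^{\frac{n-1}{2\ell}-\frac12}$. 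The key point is that $C_{\varepsilon,\ell}=-\ell/\log(1-\varepsilon^2)$ is precisely the constant making $(1-\varepsilon^2)^{n/\ell}=e^{-n/C_{\varepsilon,\ell}}$, so the exponential decay of $\|\Phi_n\|$ is $e^{-n/(2C_{\varepsilon,\ell})}$ up to a fixed constant.

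The heart of the argument is the estimate from Lemma~\ref{LemmaDistance2}, $\beta_\mu(k,n)\leq \|\Phi_{n+1}\|_{L^2(\T,\mu)}(2n+2)^{k-1}$. Plugging in $n=f(k)=\lfloor t' C_{\varepsilon,\ell}\log(k)\,k\rfloor$, the dangerous factor is $(2f(k)+2)^{k-1}$, which has logarithm at most $(k-1)\log(2f(k)+2)=(k-1)\big(\log(t' C_{\varepsilon,\ell})+\log\log k+\log k+O(1)\big)$, dominated by $k\log k\,(1+o(1))$. On the other side, $\log\|\Phi_{f(k)+1}\|_{L^2(\T,\mu)}\leq -\frac{f(k)}{2C_{\varepsilon,\ell}}+O(1)=-\frac{t'}{2}\log(k)\,k+O(\log(k)\,k)$. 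Since $t'>2$ we have $t'/2>1$, so the negative term $-\frac{t'}{2}k\log k$ beats the positive term $+k\log k$, and the difference is $-(\tfrac{t'}{2}-1)k\log k\,(1+o(1))\to -\infty$. Therefore $\beta_\mu(k,f(k))\to 0$, i.e. $f$ is $\beta$-approximating for $\mu$, and this holds uniformly over the class $\mathrm{Mar}_{\varepsilon,\ell-1}(\T)$ because the bound only used $\varepsilon$ and $\ell$. This is the step I expect to require the most care: one must track the competition between the exponential decay rate $1/(2C_{\varepsilon,\ell})$ and the $\log k$ coming from the $(2n+2)^{k-1}$ factor, and verify that the margin $t'/2-1>0$ is genuinely exploited — choosing $t'$ strictly between $2$ and $t$ is exactly what makes both the $\beta$-approximation and the subsequent gap condition work.

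Finally, with $f(k_j)\leq t' C_{\varepsilon,\ell}\log(k_j)\,k_j$ and $\ell_j:=\lfloor t\,C_{\varepsilon,\ell}\log(k_j)\,k_j\rfloor$, one has
\[
\ell_j-f(k_j)\geq (t-t')\,C_{\varepsilon,\ell}\log(k_j)\,k_j-1\xrightarrow{j\to\infty}\infty
\]
because $t-t'>0$, $C_{\varepsilon,\ell}>0$, and $k_j\to\infty$ (here $k_j\geq j$ since $\mathbf{k}$ is strictly increasing, so $\log(k_j)\,k_j\to\infty$). Theorem~\ref{Theorem beta} then gives that $E(\Lambda(\mathbf{k},\lfloor t\,C_{\varepsilon,\ell}\log(\mathbf{k})\,\mathbf{k}\rfloor))$ is dense in $L^2(\T,\mu)$ for every $\mu\in\mathrm{Mar}_{\varepsilon,\ell-1}(\T)$, which is exactly the assertion $\big(\Lambda(\mathbf{k},\lfloor t\,C_{\varepsilon,\ell}\log(\mathbf{k})\,\mathbf{k}\rfloor),\,\mathrm{Mar}_{\varepsilon,\ell-1}(\T)\big)\in\A$. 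One small technical bookkeeping item is the floor functions: replacing $\lfloor t' C_{\varepsilon,\ell}\log(k)\,k\rfloor$ by $t' C_{\varepsilon,\ell}\log(k)\,k$ in the exponent bounds only changes things by $O(k)$, which is absorbed into the $o(k\log k)$ error, and similarly for $\ell_j$; this is routine and need not be belabored.
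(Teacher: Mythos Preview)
Your proposal is correct and follows essentially the same route as the paper's proof: pick an intermediate $t'\in(2,t)$, use Lemma~\ref{LemmaMarkoffMeasure}~b) together with Lemma~\ref{LemmaDistance2} to show that $f(k)=\lfloor t'C_{\varepsilon,\ell}\log(k)\,k\rfloor$ is $\beta$-approximating for every $\mu\in\mathrm{Mar}_{\varepsilon,\ell-1}(\T)$, and then invoke Theorem~\ref{Theorem beta} via the gap $\ell_j-f(k_j)\geq (t-t')C_{\varepsilon,\ell}\log(k_j)k_j-1\to\infty$. One cosmetic slip to fix: after writing $-\tfrac{f(k)}{2C_{\varepsilon,\ell}}+O(1)$ you record the error as $O(\log(k)\,k)$, but the floor only costs $O(1)$, and as written an $O(\log(k)\,k)$ error would absorb the main term; with the intended $O(1)$ (or $o(k\log k)$) your comparison $-(\tfrac{t'}{2}-1)k\log k\,(1+o(1))\to-\infty$ is exactly right.
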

\begin{proof}
Let $\mu\in\mathrm{Mar}_{\varepsilon,\ell-1}(\T)$ and let $2<\tau<t$. First, we show that 
\[
f:\N\to\N,\quad  f(k):=\lfloor \tau\,C_{\varepsilon,\ell}\log(k)\, k\rfloor
\]
is $\beta$-approximating for $\mu$.
By Lemma~\ref{LemmaMarkoffMeasure} b),
\[
\|\Phi_n\|^2_{L^2(\T,\mu)}\leq(1-
\varepsilon
^2)^{\frac{n-1}{\ell}-1}.
\]
Thus, we can estimate
\begin{align*}
       &\|\Phi_{\tau \lfloor C_{\varepsilon,\ell}\log(k)k\rfloor+1}\|_{L^2(\T,\mu)}(2\lfloor \tau C_{\varepsilon,\ell}\log(k)k\rfloor+2)^{k-1}\\[1ex]
       &\quad \quad\leq (1-\varepsilon^2)^{\frac{\lfloor \tau C_{\varepsilon,\ell}\log(k)k\rfloor}{2\ell}-\frac{1}{2}}  (2\lfloor \tau C_{\varepsilon,\ell}\log(k)k\rfloor+2)^{k-1}\\[1ex]
       &\quad \quad\leq (1-\varepsilon^2)^{\frac{\tau C_{\varepsilon,\ell}\log(k)k}{2\ell}-\frac{1}{2}-\frac{1}{2\ell}}  (2\tau C_{\varepsilon,\ell}\log(k)k +2)^{k-1}\\[1ex]
       &\quad \quad\leq\frac{(1-\varepsilon^2)^{-\frac{1}{2}-\frac{1}{2\ell}}}{2\tau\,C_{\varepsilon,\ell}\log(k) k+2}\cdot \Big((1-\varepsilon^2)^{\frac{\tau\,C_{\varepsilon,\ell}}{2\ell}\log(k) }\big(2\tau\,C_{\varepsilon,\ell}\log(k) k+2\big)\Big)^{k}\\
       &\quad \quad\leq \frac{(1-\varepsilon^2)^{-\frac{1}{2}-\frac{1}{2\ell}}}{2\tau\,C_{\varepsilon,\ell}\log(k) k+2}\cdot \Big(k^{-\frac{\tau}{2}}\big(2\tau\,C_{\varepsilon,\ell}\log(k) k+2\big)\Big)^{k}.
   \end{align*}
   The right-hand side converges to 0 when $k$ goes to infinity. Thus, by Lemma~\ref{LemmaDistance2}, $f$ is $\beta$-approximating for $\mu$.
    Furthermore, 
    \[
    \lim_{j\to\infty} \lfloor t\,C_{\varepsilon,\ell}\log(k_j)\, k_j\rfloor-f(k_j)=\lim_{j\to\infty} \lfloor t\,C_{\varepsilon,\ell}\log(k_j)\, k_j\rfloor-\lfloor \tau\,C_{\varepsilon,\ell}\log(k_j)\, k_j\rfloor=\infty
    \]
    since $\tau<t$. Thus, by Theorem~\ref{Theorem beta}, $E(\Lambda(\mathbf{k},\lfloor t\,C_{\varepsilon,\ell}\log(\mathbf{k})\, \mathbf{k}\rfloor))$ is dense in $L^2(\T,\mu)$.
\end{proof}
As a consequence of Theorem~\ref{Existence of infinite sets}, we show that if 
$\Lambda = \mathbb{N}_0 \setminus \Gamma$ with 
\[
\Gamma = \{\lfloor e^{t^n} \rfloor \mid n \in \mathbb{N}\}
\] 
for some $t > 1$, then $(\Lambda, \MMeas) \in \A$. One can in fact even choose $\Gamma$ to be a union of large intervals, in the sense that the intervals have double exponential length.
\begin{corollary}\label{ExampleGamma}
   Let $t\geq \Tilde{t}>1$ and $C>0$. Let
    \[
    \Gamma:=\bigcup_{j\in\N}\llbracket\lfloor e^{t^j}\rfloor,\lfloor e^{t^j}\rfloor+\lfloor Ce^{\Tilde{t}^j}\rfloor\rrbracket.
    \]
    Then $(N_0\backslash \Gamma,\MMeas)\in\A$.
\end{corollary}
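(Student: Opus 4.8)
The plan is to exhibit, inside $\N_0\setminus\Gamma$, a set of the form $\Lambda(\mathbf{k},\lfloor\mathbf{k}^s\rfloor)$ to which Theorem~\ref{Existence of infinite sets} applies directly, and then to conclude by the monotonicity of $\A$ in its first argument recorded in Remark~\ref{ObviousFacts}. Write $a_j:=\lfloor e^{t^j}\rfloor$ and $b_j:=\lfloor e^{t^j}\rfloor+\lfloor Ce^{\Tilde{t}^j}\rfloor$ for the left and right endpoints of the removed blocks, so that $\Gamma=\bigcup_{j\in\N}\llbracket a_j,b_j\rrbracket$; both $(a_j)_j$ and $(b_j)_j$ are non-decreasing, being floors of strictly increasing sequences (here $t\geq\Tilde{t}>1$), and since $\Tilde{t}\leq t$ one has the crude bound $b_j\leq (1+C)e^{t^j}+1$.

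Next I would fix $s$ with $1<s<t$, which is possible because $t>1$. The heart of the argument is then the inequality
\[
(b_j+1)+\big\lfloor (b_j+1)^s\big\rfloor<a_{j+1},
\]
valid for all sufficiently large $j$: by the bound above the left-hand side is $O\big(e^{st^j}\big)$, while $a_{j+1}\geq e^{t^{j+1}}-1=e^{t\cdot t^j}-1$, and $s<t$ forces $e^{st^j}/e^{t\cdot t^j}=e^{-(t-s)t^j}\to 0$, so there exists a threshold $J\in\N$ beyond which the displayed inequality holds. Fixing such a $J$, I would set $k_i:=b_{J+i-1}+1$ for $i\in\N$; then $\mathbf{k}:=(k_i)_i$ is strictly increasing in $\N$ (the displayed inequality gives $b_j<a_{j+1}\leq b_{j+1}$, so $(b_j)_{j\geq J}$ is strictly increasing).

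For each $i$, the interval $\llbracket k_i,\,k_i+\lfloor k_i^s\rfloor\rrbracket$ begins at $b_{J+i-1}+1$, hence lies strictly to the right of every block $\llbracket a_j,b_j\rrbracket$ with $j\leq J+i-1$ (as $b_j\leq b_{J+i-1}<k_i$), and, by the displayed inequality applied at $j=J+i-1$, it ends strictly below $a_{J+i}$, hence to the left of every block with $j\geq J+i$ (as $a_j\geq a_{J+i}$). So this interval meets no block at all, and therefore
\[
\Lambda\big(\mathbf{k},\lfloor\mathbf{k}^s\rfloor\big)=\bigcup_{i\in\N}\llbracket k_i,\,k_i+\lfloor k_i^s\rfloor\rrbracket\ \subseteq\ \N_0\setminus\Gamma.
\]
Applying Theorem~\ref{Existence of infinite sets} with this $s>1$ and this strictly increasing $\mathbf{k}$ gives $\big(\Lambda(\mathbf{k},\lfloor\mathbf{k}^s\rfloor),\MMeas\big)\in\A$, and combining this with the inclusion just obtained and the implication ``$\Lambda'\subseteq\Lambda$ and $(\Lambda',\mathcal{C})\in\A$ imply $(\Lambda,\mathcal{C})\in\A$'' from Remark~\ref{ObviousFacts} yields $(\N_0\setminus\Gamma,\MMeas)\in\A$.

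The only genuine obstacle is the displayed inequality $(b_j+1)+\lfloor(b_j+1)^s\rfloor<a_{j+1}$: one must verify that raising the double-exponentially large quantity $b_j+1\asymp e^{t^j}$ to the power $s$ still leaves it comfortably inside the gap between consecutive removed blocks, whose right end $a_{j+1}\asymp e^{t^{j+1}}$ is far larger---which works precisely because the hypothesis $t>1$ allows the choice $s\in(1,t)$. The parameter $C$ and the exponent $\Tilde{t}$ enter only through the harmless estimate $b_j\leq(1+C)e^{t^j}+1$; the rest is bookkeeping together with the two already-established results.
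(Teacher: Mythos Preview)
Your proof is correct and follows essentially the same route as the paper: pick $1<s<t$, set $k_j:=b_j+1$ (the point just past each removed block), use the double-exponential growth $a_{j+1}\asymp e^{t\cdot t^j}\gg e^{s t^j}\asymp (b_j+1)^s$ to show that $\llbracket k_j,k_j+\lfloor k_j^s\rfloor\rrbracket$ fits in the gap before the next block, and then apply Theorem~\ref{Existence of infinite sets} together with monotonicity. Your write-up is in fact a bit more careful than the paper's about the off-by-one bookkeeping (e.g.\ explicitly checking that the intervals miss every block on both sides and that $\mathbf{k}$ is strictly increasing).
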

\begin{proof}
    Let $1<s<t$. Then
    \begin{align*}
        \limsup_{j\to\infty} \frac{\lfloor e^{t^{j+1}
        }\rfloor}{(\lfloor e^{t^j}\rfloor+\lfloor Ce^{\Tilde{t}^j}\rfloor+1)^s}&\geq \limsup_{j\to\infty} \frac{e^{t^{j+1}}}{2( e^{t^j}+Ce^{\Tilde{t}^j}+1)^s}\geq \limsup_{j\to\infty} \frac{e^{t^{j+1}}}{4e^{st^j}}=\infty.
    \end{align*}
    Thus, there exists $N\in\N$ such that for every $j\geq N$
    \begin{equation*}
        \lfloor e^{t^{j+1}}\rfloor \geq 2\Big(\lfloor e^{t^{j}}\rfloor+\lfloor Ce^{\Tilde{t}^j}\rfloor +1\Big)^{s}.
    \end{equation*}
    Put $k_j:=\lfloor e^{t^{j}}\rfloor+\lfloor Ce^{\Tilde{t}^j}\rfloor +1$ for every $j\in\N$. It follows that
    \begin{align*}
        \N_0\backslash \Gamma \supseteq &\bigcup_{j\geq N} \llbracket k_j,\lfloor e^{t^{j+1}}\rfloor\rrbracket \supseteq \bigcup_{j\geq N} \llbracket k_j,k_j+\lfloor k_j\rfloor^s\rrbracket=\Lambda(\mathbf{k},\lfloor \mathbf{k}^s\rfloor)
    \end{align*}
    where $\mathbf{k}:=(k_j)_{j\geq N}$ and $\lfloor \mathbf{k}^s\rfloor:=(\lfloor k_j^s\rfloor)_{j\geq N}$. Since $E(\Lambda(\mathbf{k},\lfloor \mathbf{k}^s\rfloor))$ is dense in $L^2(\T,\mu)$ for every $\mu\in\MMeas$ by Theorem~\ref{Existence of infinite sets}, also $E(\N_0\setminus \Gamma)$ is dense in $L^2(\T,\mu)$ for every $\mu\in\MMeas$.
\end{proof}
\bibliography{main}
\end{document}